\newtheorem{theorem}{Theorem}
\newtheorem{lemma}[theorem]{Lemma}
\newtheorem{question}[theorem]{Question}
\newtheorem{corollary}[theorem]{Corollary}
\newtheorem{proposition}[theorem]{Proposition}
\theoremstyle{definition}
\newtheorem*{example}{Example}
\newtheorem*{remark}{Remark}
\newtheorem*{observation}{Observation}
\newtheorem*{acknowledgement}{Acknowledgement}
\title[$2$-complexes not embedding in $\mathbb{R}^4$]
{Fungible obstructions to embedding $2$-complexes}
\author{Grigori Avramidi, T. T$\hat{\mathrm{a}}$m Nguy$\tilde{\hat{\mathrm{e}}}$n-Phan}
\address{Max Planck Institute of Mathematics\\
Bonn\\
Germany}
\email{gavramidi@mpim-bonn.mpg.de}
\address{Karlsruhe Institute of Technology\\
Institute of Algebra and Geometry\\
Karlsruhe\\
Germany}
\email{tam@mpim-bonn.mpg.de}
\def\ra{\rightarrow}
\def\beqa{\begin{eqnarray}}
\def\eeqa{\end{eqnarray}}
\def\beqa{\begin{eqnarray}}
\def\eeqa{\end{eqnarray}}
\DeclareMathOperator{\Lk}{Lk}
\def\x{\hat{x}}
\begin{document}

\begin{abstract}
We give new examples of finite, simplicial $2$-complexes that do not PL embed in $\mathbb{R}^4$ and exhibit, for each such complex, a family of PL immersions into $\mathbb{R}^4$ that hide the obstruction to embedding in ``higher and higher order  Milnor invariants". In addition, we show that the embedding obstructions defined by Krushkal in \cite{Krushkal} vanish for our examples. We also answer a question in \cite{AOS}.
%, which, in a different universe, would be a paper itself. 
\end{abstract}
\maketitle

\section{Introduction}\label{intro}
Van Kampen in 1933 gave an elementary method for deciding whether a finite, simplicial $n$-complex does not embed in $\mathbb{R}^{2n}$ (\cite{vanKampenpaper}, \cite{vanKampenerr}). He found a necessary condition for an $n$-complex to embed in $\mathbb{R}^{2n}$ and showed, as applications of this method, that all compact, PL $n$-manifolds PL embed in $\mathbb{R}^{2n}$ and gave examples, in each dimension $n$, of $n$-complexes $X^n$ that do not PL embed in $\mathbb{R}^{2n}$, such as the $n$-skeleton of the $(2n+2)$-dimensional simplex\footnote{When $n=1$, the complex is the complete graph $K_5$ on $5$ vertices.} and the $n$-fold join of $n$ sets of three points. Van Kampen showed that for any generic, PL immersion $X\looparrowright \mathbb{R}^{2n}$, there must be a pair of disjoint simplices of $X$ whose images have nonempty intersection, thus, $X$ cannot PL embed in $\mathbb{R}^{2n}$.  We give a summary of van Kampen's method in Section \ref{vKobstruction}.

In modern language, van Kampen's necessary condition is phrased as ``the van Kampen obstruction of $X$ vanishes". It turns out that when $n\ne 2$, the van Kampen obstruction is the only obstruction to embedding into $\mathbb{R}^{2n}$ for an $n$-complex. Van Kampen, however, did not sucessfully prove the completness of his method in these dimensions since  it requires the Whitney trick when $n>2$, which was developed a decade later. The case $n=1$ follows as a corollary of Kuratowski's theorem on planarity of graphs.  For $n=2$, Freedman, Krushkal and Teichner in 1994 gave examples of finite, simplicial $2$-complexes $K$ which do not  embed in $\mathbb{R}^4$ even though their van Kampen obstructions are zero (\cite{FKT}). The non-embedding mechanism of $K$ is quite similar to how one proves that the Borromean rings do not bound disjoint disks in $\mathbb{D}^4$ even though they have zero pairwise linking numbers. We will review these examples and why they do not embed in $\mathbb R^4$ in Section \ref{FKT}.

In this paper, we exhibit examples of finite, simplicial $2$-complexes $X_k$ that have zero van Kampen obstruction yet do not PL embed in $\mathbb{R}^4$ but for a different reason, which will be addressed below.  The $2$-complexes $X_k$ have many similarities with the $2$-complexes $K$ by Freedman-Krushkal-Teichner given in \cite{FKT}. However, the method in \cite{FKT} does not carry over to show that $X_k$ does not embed, as far as we can see. Indeed, Krushkal (\cite{Krushkal}) formulated a general obstruction in terms of Massey products (and their relation to Milnor invariants) on boundaries of thickenings of a $2$-complex that captures the mechanism preventing the complexes $K$ in \cite{FKT} from embedding in $\mathbb{R}^4$. We will show in Section \ref{Krushkalobstruction} that the obstructions defined by Krushkal vanish for our complexes $X_k$.

More interestingly and surprisingly, 
there are PL immersions $X_k\rightarrow\mathbb{R}^4$ for which the Milnor invariants associated to the images of disjoint triples of simplices of $X_k$ vanish up till arbitrarily large order. We will exhibit these immersions in Section \ref{embeddings}.

Our examples have many similarities with the examples of Freedman-Krushkal-Teichner, so we will briefly recall their examples before getting to the main result of this paper.

\bigskip

\noindent
\textbf{The Freedman-Krushkal-Teichner examples} are obtained by taking two copies of the $2$-skeleton of the $6$-dim simplex (say, one with vertices $x_0, x_1, ..., x_6$ and the other one with vertices $\hat{x}_0, \hat{x}_1, ..., \hat{x}_6$) and ``wedging" them together by means of an edge (e.g. by inserting an edge between $x_6$ and $\hat{x}_6$), then removing the interior of the two simplices $x_4x_5x_6$ and $\hat{x}_4\hat{x}_5\hat{x}_6$ and gluing in a $2$-disk $D^2$ with attaching map $\varphi$ given by an element of the commutator subgroup of $F_2 = \langle x_4x_5x_6, \hat{x}_4\hat{x}_5\hat{x}_6\rangle$.

%It is not difficult to see explicitly that $(K-\partial D^2)$ embeds PL in $\mathbb{R}^4$. With a little bit of thought, one sees that such an embedding can be extended to an immersion $K \rightarrow \mathbb{R}^4$ under which the intersection numbers (counted with orientation) between any pair of disjoint $2$-simplices of $K$ are equal to $0$, which means by definition that the van Kampen obstruction of $K$ vanishes. Yet $K$ does not embed in $\mathbb{R}^4$, as shown in \cite{FKT}. In $K$ there are two disjoint $2$-spheres, namely the tetrahedra $S = x_0x_1x_2x_3$ and $\hat{S} = \x_0\x_1\x_2\x_3$, that together with the special $2$-cell $D^2$ are responsible for preventing $K$ from PL embedding in $\mathbb{R}^4$, in that for any PL embedding $f\colon K -\text{int}(D^2)\hookrightarrow\mathbb{R}^4$, the loop $f(\partial D^2)$ must represent a nontrivial element in $\pi_1(\mathbb{R}^4 -f(S) -f(\hat{S}))$, preventing $f$ from extending to an embedding of $K$. A key ingredient of the proof is a theorem of Stallings (\cite{Stallings}), which in this case implies that $f(\partial D^2)$ must represent a nontrivial element in some $(k+1)$-th nilpotent quotient of $\pi_1(\mathbb{R}^4 -f(S) -f(\hat{S}))$, where $k$ is the last term in the lower central series of $F_2$ that contains the attaching map $\varphi$ of $D^2$. That is how one sees that $f(\partial D^2)$ must represent a nontrivial element in $\pi_1(\mathbb{R}^4 -f(S) -f(\hat{S}))$.

We will explain in more detail in Section \ref{FKT} why $K$ does not PL embed in $\mathbb{R}^4$ but in slightly different wording. The technical difference between our explanation and the proof in \cite{FKT} will be addressed in the first remark at end of Section \ref{noembedd}.

\subsection*{Main result}\label{main result}
We give a family of finite $2$-complexes $X_k$, for $k = 3, 5, 7, ...$,  which have zero van Kampen obstruction but do not embed in $\mathbb{R}^4$. Roughly speaking, the thing that obstructs these $X_k$ from PL embedding in $\mathbb{R}^4$ is the fact that in the free group $F_2 =\langle a, b \rangle$, any element of the form $a^rb^s$, for $r,s \ne 0$ is not a $k$-th power for any $k >1$.

The $2$-complexes $X_k$ are defined as follows. First, let
\begin{itemize}
\item[.] $\Delta_6^2$ and $\hat{\Delta}_6^2$ be two copies of the $2$-skeleton of the $6$-dim simplex,
\item[.] $P_k$ be the ``$k$-fold pseudo-projective-plane", i.e. $P_k$ is obtained by taking a $2$-disk and identifying its boundary via a degree-$k$ covering map of the circle. (So $P_2$ is the classical projective plane.)
\end{itemize}
Let $$X_k = \Delta_6^2\;\#\;\hat{\Delta}_6^2\; \# \;P_k$$ be obtained by 
\begin{itemize}
\item[.] first, inserting an edge between $x_6$ and $\hat{x}_6$, 
\item[.] removing the interior of the $2$-simplices $x_4x_5x_6$ and $\x_4\x_5\x_6$ of the resulting space, and letting $\gamma$ be the loop that is the concatenation of the loop $x_4x_5x_6$ and the loop $\x_4\x_5\x_6$ via the edge $x_6\x_6$ (see Figure 1),
\item[.] attaching the boundary $(P_k - D^2)$, the complement of a disk $D^2$ in $P_k$, along the loop $\gamma$.  
\end{itemize}
A note on base points: in talking about fundamental groups, we will use $x_6\x_6$ as a ``base interval" instead of using a base point.
\begin{figure}[h!]
\centering
\includegraphics[scale=0.5]{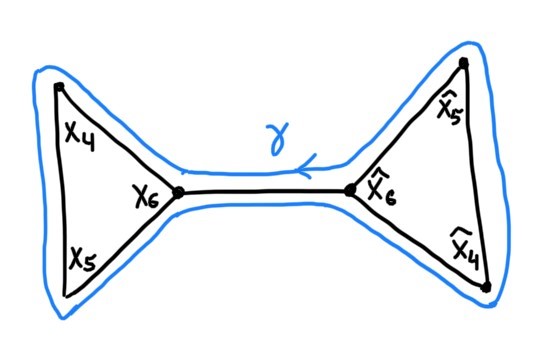}
\caption{The loop $\gamma$.}
\end{figure}

We leave it to the reader to verify that $X_k$ is an iterated connect sum, which explains the above notation. As we will see in Section \ref{obstruction of X_k}, the van Kampen obstruction of $X_k$ is zero when $k$ is odd and we leave it as an exercise for the readers to verify that it is nonzero when $k$ is even.

\begin{theorem}\label{main theorem}
If $k>1$ and an odd integer, then $X_k$ has zero van Kampen obstruction but does not PL embed in $\mathbb{R}^4$.  
\end{theorem}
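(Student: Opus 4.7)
The theorem has two parts. For the van Kampen obstruction computation, I would defer to Section \ref{obstruction of X_k}; the key observation is that the $2$-cell of $P_k$ is attached along a degree-$k$ covering of $\gamma$, so its contribution to the van Kampen cocycle is $k$ times what it would be under a degree-$1$ attachment, and for odd $k$ this rescaling cancels the $\pm 1$ contributions of the two $\Delta_6^2$ factors modulo coboundaries.

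For non-embeddability I would argue by contradiction. Suppose $X_k$ PL embeds in $\mathbb R^4$. Decompose $X_k = Y \cup_\gamma (P_k - D^2)$, where $Y = L \cup_e R$ is the edge-connected sum of the punctured $2$-skeletons $L = \Delta_6^2 \setminus \operatorname{int}(x_4 x_5 x_6)$ and $R = \hat\Delta_6^2 \setminus \operatorname{int}(\hat x_4 \hat x_5 \hat x_6)$, and the boundary of $P_k - D^2$ wraps the core circle $c$ of $P_k$ with degree $k$. Let $N$ be a regular PL neighborhood of $Y$, set $W = \mathbb R^4 \setminus \operatorname{int}(N)$, and let $\tilde\gamma \subset \partial W$ be the pushoff of $\gamma$. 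The plan is to compute $[\tilde\gamma] \in H_1(W)$ in two incompatible ways.

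On the $(P_k - D^2)$ side, the embedded $P_k - D^2 \subset W$ retracts to $c$ and $\tilde\gamma$ represents $k[c]$ in $H_1(P_k - D^2)$, so $[\tilde\gamma] = k \cdot [\iota(c)]$ in $H_1(W)$ --- in particular divisible by $k$. On the $Y$ side, Alexander duality gives
\[
H_1(W) \cong H^2(Y) \cong H^2(L) \oplus H^2(R)
\]
(the edge contributes no $H^2$), splitting $[\tilde\gamma] = [\tilde\gamma_L] + [\tilde\gamma_R]$. The Alexander pairing $H_1(W) \otimes H_2(Y) \to \mathbb Z$ agrees (up to sign) with the $\mathbb R^4$-intersection pairing: $\langle [\tilde\gamma_L], [\alpha]\rangle = D_L \cdot \alpha$ for any 2-chain $D_L \subset \mathbb R^4$ with $\partial D_L = \gamma_L$. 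I would apply this with $\alpha = T' := \partial(x_0 x_1 x_2 x_3) \in H_2(L)$, the boundary of the tetrahedron on $\{x_0,x_1,x_2,x_3\}$ --- an embedded $S^2$ in $L$ disjoint from $\gamma_L$. Then $D_L \cdot T'$ is (up to sign) the integer van Kampen invariant of the immersion $\Delta_6^2 = L \cup D_L \looparrowright \mathbb R^4$, which equals $\pm 1$. Since $[\tilde\gamma_R]$ lies in the independent $R$-summand, $\langle [\tilde\gamma], [T']\rangle = \pm 1$. Thus $[\tilde\gamma]$ is primitive in $H_1(W)$, contradicting divisibility by $k > 1$.

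The main obstacle I expect is verifying the identity $D_L \cdot T' = \pm(\text{van Kampen invariant})$ with correct signs, and checking that the edge-connect-sum structure of $Y$ does not introduce cross-terms between the $L$- and $R$-summands of $[\tilde\gamma]$. A careful choice of basepoint on the base interval $e$ and standard Mayer--Vietoris bookkeeping should resolve both issues.
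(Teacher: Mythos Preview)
Your homological strategy breaks down at the sentence ``$D_L\cdot T'$ is (up to sign) the integer van Kampen invariant of the immersion $\Delta_6^2=L\cup D_L\looparrowright\mathbb R^4$, which equals $\pm 1$.'' There is no integer-valued invariant here that is forced to be $\pm 1$. What Conway--Gordon gives you, after noting that every pair of disjoint $3$-cycles in $K_6$ other than $(x_1x_2x_3,x_4x_5x_6)$ bounds disjoint disks already in the embedded $L$, is only that $D_L\cdot T'\equiv 1\pmod 2$. Since $k$ is odd, ``odd'' does not exclude ``divisible by $k$'', and your contradiction evaporates.

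This is not a repairable detail: the abelian information in $H_1(W)$ is genuinely too weak. Concretely, for the embedding of $\bowtie$ built in Section~\ref{obstruction of X_k} (the Figure~3(c)-style picture with extra twists so that $\Lk(x_1x_2x_3,x_4x_5x_6)=\Lk(\hat x_1\hat x_2\hat x_3,\hat x_4\hat x_5\hat x_6)=k$), every pairing $\langle[\tilde\gamma],[S]\rangle$ with $S\in H_2(Y)$ lands in $k\mathbb Z$, so $[\tilde\gamma]$ \emph{is} divisible by $k$ in $H_1(W)\cong H^2(Y)$. The paper's proof confronts exactly this issue by passing from $H_1$ to $\pi_1$: one shows $f(\beta)$ is a $k$-th power in $\pi_1(W)$ while its image in $F_2=\langle a,b\rangle$ is $a^rb^s$ with $r,s$ odd, and then invokes Baumslag's theorem that such an element is not a $k$-th power in some finite nilpotent quotient. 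The non-abelian step (together with the splitting $\pi_1(C)\cong Z*\hat Z$, which guarantees the image really is $a^rb^s$ rather than a product of conjugates) is the heart of the argument and cannot be replaced by a linking-number computation.
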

We will prove Theorem \ref{main theorem} in Section \ref{examples}. Our proof of Theorem \ref{main theorem} depends heavily on the use of links of vertices, which is a PL notion. It is therefore natural to wonder whether $X_k$ \emph{topologically} embeds in $\mathbb{R}^4$. 

\begin{question}
Does $X_k$, for odd $k>1$, topologically embed into $\mathbb{R}^4$?
\end{question}

There are two approaches to showing that a $n$-complex does not embed in $\mathbb{S}^{2n}$. One is to start with a hypothetical embedding and arrive at a contradiction. The other is to start with a particular generic immersion which has some intersections and then show that the intersections do not all go away as one varies the immersion by cooking up an immersion invariant out of them. We used the former approach to show that our examples $X_k$ do not PL embed in $\mathbb{S}^4$. Part of the novelty of these examples is that, in contrast to the examples in \cite{FKT}, %and \cite{SSS},
 they do not seem to be amenable to the later approach. We will illustrate this by exhibiting immersions that hide embedding obstructions from potential invariants in Section \ref{embeddings}. More specifically, we will exhibit,  for each odd $k >1$, various PL immersions $f_n \colon X_k \rightarrow \mathbb{R}^4$ %for which the Milnor invariants associated to all triples of disjoint simplices of $X_k$ vanish up till an arbitrarily high order. In particular, under these immersions $f_n\colon X_k \rightarrow \mathbb{R}^4$, 
under which 
for each $2$-simplex $D^2$ of (a triangulation of) $X_k$ and any subcomplex $S\subset X_k$ that is disjoint from $D^2$ and that is a union of disjoint spheres\footnote{e.g. $S$ is the union of the tetrahedra $x_0x_1x_2x_3$ and $\x_0\x_1\x_2\x_3$.} or wedges of spheres along a simplex\footnote{e.g. the union of the tetrahedra $x_0x_1x_2x_3$ and $x_0x_3x_4x_5$, which are two spheres wedged along the edge $x_0x_3$.}, the restriction of $f$ to $S$ is an embedding and the loop $f(\partial D^2)$ is trivial in the $n$-th nilpotent quotient  of $\pi_1(\mathbb{R}^4 - f(S))$.%  for all $m < n$.

\subsection*{On Section \ref{octahedral}}
The use of the pseudo-projective-plane $P_k$ was motivated by a construction in \cite{AOS}, in which it was asked whether the ``octahedralization" of $P_k$ embeds PL in $\mathbb{R}^4$. Since the proof of Theorem \ref{main theorem} can be modified easily to show that the answer to this question is no when the triangulation of $P_k$ is fine enough (which is not that fine), we will give a proof of this in Section \ref{octahedral}. The specific statement is Proposition \ref{octanotembed}.
%\subsection*{Warning} The complex $\Gamma_k$ is often misunderstood to be one obtained by removing a $2$-simplex in $\Delta_6^2$ and then attaching a $2$-disk $\mathbb{D}^2$ via a degree-$k$ covering map from $\partial\mathbb{D}^2$ to the boundary of the $2$-simplex removed. The latter will always have nonzero van Kampen obstruction whether $k$ is odd or even (as long as $k \ne 0$).

\begin{acknowledgement}
The first author would like to thank the Max Planck Institute of Mathematics, Bonn, and the second author would like to thank the Karlsruhe Institute of Technology for their hospitality and financial support. We would like to thank Shmuel Weinberger for pointing us to Baumslag's paper \cite{Baumslag} and Slava Krushkal for comments on an earlier version of the paper. 
\end{acknowledgement}

Unless otherwise stated, all immersions or embeddings in this paper are PL and \emph{generic}\footnote{The only singularities are double points.}.
\section{On the van Kampen obstruction}\label{vKobstruction}
We will begin this section by recalling van Kampen's method, and  then give a proof that the $2$-skeleton $\Delta_6^2$ of the $6$-dim simplex does not embed in $\mathbb{R}^4$. This is not exactly the same as van Kampen's proof but it is very similar in spirit and is more relevant to the proof of the main result. For convenience, we will prove that $\Delta_6^2$ does not embed in $\mathbb{S}^4$ instead of in $\mathbb{R}^4$.

\subsection{Van Kampen's method}
Let $X^n$ be a finite, simplicial $n$-complex $X^n$.   For any immersion $f\colon X^n \rightarrow \mathbb{R}^{2n}$, van Kampen formed a vector $V_f$, which we will call the \emph{van Kampen vector}, whose components correspond to pairs of disjoint $n$-simplices in $X$. Each component of $V_f$ is equal to the intersection number of the images of the corresponding pair of simplices, which takes value either in $\mathbb{Z}$ if one assigns to each simplex of $X$ an orientation or in $\mathbb{Z}/2\mathbb{Z}$ if one ignores orientations. Thus, if $f$ is an embedding, then $V_f$ is the zero vector. A different generic immersion $g \colon X\rightarrow \mathbb{R}^{2n}$ might give a different vector $V_g$, e.g. if $g$ is obtained by modifying $f$ by pushing the interior of one $n$-simplex $F$ of $X$ across a $(n-1)$-simplex $E$, as illustrated in Figure 2.
\begin{figure}[h!]
\centering
\includegraphics[scale=0.4]{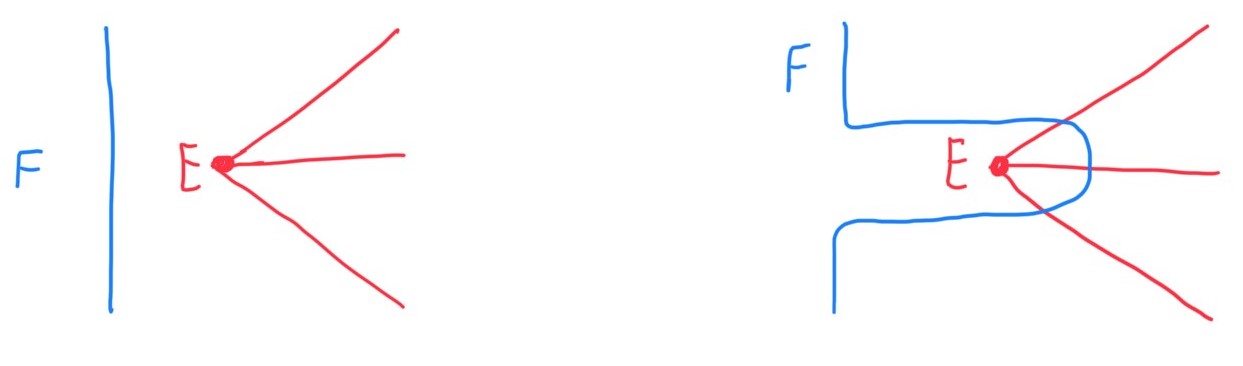}
\caption{A \emph{finger move}, illustrated with graphs.}
\end{figure}

In this case, the difference between $(V_f - V_g)$ is a vector $W_{F, E}$ whose non-zero components correspond to precisely the pairs $(F, F')$ for all $n$-simplices $F'$ that contain the $(n-1)$-simplex $E$ and that are disjoint from $F$.

Van Kampen proved that, in general, for any two  immersions $f, g\colon X^n \rightarrow \mathbb{R}^{2n}$, the difference $(V_f - V_g)$ is a sum of such vectors $W_{F_i,E_j}$, where $F_i$ is an $n$-simplex and $E_j$ is an $(n-1)$-simplex of $X$. (Note that $W_{F_i,E_j}$ depends on $X$ only and does not depend of an immersion of $X$.) Thus, if $X$ embeds in $\mathbb{R}^{2n}$, then  for any immersion $f\colon X\rightarrow \mathbb{R}^{2n}$, the vector  $V_f$ must be a linear combination of vectors $W_{F_i,E_j}$. In other words, if the vector $V_f$, for some immersion $f\colon X\rightarrow \mathbb{R}^4$, is not a linear combination of vectors $W_{F_i,E_j}$, then $X$ does not PL embed in $\mathbb{R}^4$. This is the mechanism van Kampen used to show that the two $n$-complexes mentioned at the beginning of the paper do not embed in $\mathbb{R}^{2n}$.  In modern language, we say that the \emph{van Kampen obstruction} of $X$ is zero if $V_f$ is a linear combination of vectors $W_{F_i,E_j}$ and is nonzero otherwise. 

Van Kampen also showed that when $n\geq 3$, intersections between adjacent $n$-simplices or self-intersections of an $n$-simplex of $X$ can be removed by perturbing the immersion. This means, with the aid of Whitney disks introduced in the following decade, that when $n\geq 3$, $X^n$ embeds in $\mathbb{R}^{2n}$ if and only if $V_f$ (for some PL immersion $f\colon X\rightarrow\mathbb{R}^{2n}$) is a linear combination of $W_{F_i,E_j}$. In other words, when $n\geq 3$, $X^n$ embeds in $\mathbb{R}^{2n}$ if and only if the van Kampen obstruction of $X$ is zero.

\subsection*{Triangulations versus regular regions}
Note that whether the van Kampen obstruction of a complex $X$ is equal to zero does not depend on the triangulation of $X$. Of course, for different subdivisions of $X$, we have different van Kampen vectors $V_f$ of different sizes. However, what really matters is the connected components of the non-singular subset of $X$, i.e. the set of manifold points of $X$, and how they fit together at the singular set. This is because of the following reason. 

We will call each connected component of the non-singular subset of $X$ a \emph{regular region}. For a PL triangulation of $X$ (that is compatible with the original simplicial structure of $X$), for each regular region we pick an $n$-simplex that represents that region. Then, for any immersion $f\colon X \rightarrow \mathbb{R}^4$, there may be many intersections of simplices that are not representatives, but we can modify $f$ by pushing each such intersection between two non-representative simplices so that it moves from one simplex to another along a track within the regular region containing the simplex until it is in a representative simplex. Thus, we can always modify $f$ with finger moves to obtain an immersion for which all intersections are between representative simplices. 

Hence, the van Kampen vector can practically be taken to be such that the number of its  entries is equal to the number of pairs of non-adjacent regular regions of $X$.

\subsection{The complex $\Delta_6^2$ does not embed in $\mathbb{S}^4$}\label{wlog}
Let the vertices of $\Delta_6^2$ be called $x_0, x_1, ..., x_6$ as before. Suppose for contradiction that there is a PL embedding $f\colon\Delta_6^2\rightarrow\mathbb{S}^4$. We can assume without loss of generality (the skeptical readers can see Section \ref{PLversusL} for an explanation) that  $x_0$ is mapped to the North Pole of $\mathbb{S}^4$ and 
\begin{itemize}
\item[*] the boundary of the $1$-neighborhood of $x_0$ (which is the subcomplex $K_6 \subset\Delta_6^2$ that is the complete graph  on $x_1, x_2, ..., x_6$) is mapped to the equator $\mathbb{S}^3$, and the image of the $1$-neighborhood of $x_0$ is the cone on $f(K_6)$,
\item[**] all other $2$-simplices of $\Delta_6^2$ (i.e. those whose edges belong to $K_6$) are contained in the closed Southern hemisphere, which is $\mathbb{D}^4$.  
\end{itemize}
By (**), the complex $\Delta_6^2$ embeds in $\mathbb{S}^4$ only if one can fill in all the $3$-cycles in $K_6 \hookrightarrow \mathbb{S}^3$ with disjoint $2$-disks in $\mathbb{D}^4$. Since  each $3$-cycle in $K_6$ bounds a $2$-simplex in $\Delta_6^2$, it follows that any pair of disjoint $3$-cycles $c_1$ and $c_2$ in $K_6$ must have linking number $\Lk(c_1, c_2) =0$. However, this is not possible by a theorem of Conway and Gordon, which says that $K_6$ cannot \emph{linklessly} embed in $\mathbb{S}^3$.  

\begin{theorem}[Conway-Gordon \cite{ConwayGordon}]
Any embedding of the complete graph $K_6$ on six vertices into $\mathbb{S}^3$ contains a pair of disjoint cycles with nonzero linking number. 
\end{theorem}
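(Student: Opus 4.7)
The plan is to prove the theorem through a single $\mathbb{Z}/2\mathbb{Z}$-valued invariant $\omega(f)$ of an embedding $f\colon K_6 \hookrightarrow \mathbb{S}^3$, defined by
$$\omega(f) \;=\; \sum_{(c_1,c_2)} \Lk(f(c_1),f(c_2)) \pmod{2},$$
where the sum runs over the ten unordered pairs $(c_1,c_2)$ of vertex-disjoint $3$-cycles of $K_6$ (there are ten because such pairs correspond to partitions of the six vertices into two blocks of size three, giving $\binom{6}{3}/2 = 10$). If I can show $\omega(f) = 1$ for \emph{every} embedding, then at least one summand must be odd and hence nonzero, which is exactly the conclusion of the theorem.

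First I would prove that $\omega$ does not depend on $f$. Any two generic PL embeddings of a finite graph into $\mathbb{S}^3$ differ, up to ambient isotopy, by a finite sequence of crossing changes between pairs of edges (the spatial-graph analog of the standard result for links, obtained by putting the embeddings in general position and applying Reidemeister-type moves to their diagrams). Only crossing changes between two \emph{vertex-disjoint} edges can affect any $\Lk(f(c_1),f(c_2))$, since two adjacent edges cannot simultaneously sit inside a pair of disjoint cycles. For such a crossing change at disjoint edges $e_1=\{a,b\}$ and $e_2=\{c,d\}$, writing $\{u,v\}=V(K_6)\setminus\{a,b,c,d\}$, the affected disjoint triangle pairs are precisely those with $c_1\supset e_1$ and $c_2\supset e_2$, and a direct count gives exactly two such unordered pairs, namely $(abu,cdv)$ and $(abv,cdu)$. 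Each of their linking numbers shifts by $\pm 1$, so $\omega(f)$ changes by an even integer and is preserved modulo two.

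It then suffices to compute $\omega$ for one concrete embedding $f_0$. I would take a convenient model in which the $15$ edges of $K_6$ are straight segments (for example, place the six vertices in convex position on a round circle in $\mathbb{S}^3 \subset \mathbb{R}^4$ and perturb slightly), read off the over/under crossing data from the resulting planar diagram, and verify by direct case-analysis that an odd number of the ten disjoint-triangle pairs link nontrivially mod $2$. That value is then forced on every embedding, completing the proof.

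The main obstacle is likely to be the last step: carrying out the mod-$2$ count for the explicit embedding carefully enough to be sure the answer is $1$ rather than $0$, since with ten linking numbers to tabulate it is easy to miscount. A secondary, more conceptual point is justifying cleanly that any two spatial embeddings of $K_6$ are crossing-change equivalent; this is classical but should be invoked explicitly, together with the remark that crossing changes at edges sharing a vertex automatically preserve $\omega$ and therefore need no separate treatment.
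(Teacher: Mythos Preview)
Your proposal is correct and follows essentially the same approach as the paper's proof idea: define the mod-$2$ sum of linking numbers over disjoint cycle pairs, show it is invariant under crossing changes by a parity count, and then evaluate it on a single explicit embedding. You have supplied more detail than the paper (which leaves the counting and the explicit computation as exercises), in particular the observation that a crossing change at disjoint edges affects exactly two of the ten triangle pairs.
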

\begin{proof}[Proof idea]
The key idea is  to take, for each embedding $\varphi$ of $K_6$, the sum of the linking numbers of all pairs of disjoint cycles in $K_6$ 
\[\mathcal{L}(\varphi) = \sum_{\text{all disjoint cycles of}\; K_6}  \Lk (c_1, c_2)\]
and show that it is congruent to 1 (mod 2).

To show that this sum is independent modulo 2 of embeddings of $K_6$, note that given any two embeddings of $K_6$, one embedding can be obtained by performing some crossing changes on the other. Thus, if we can show that the above sum changes by an even number after any crossing change, and if we compute it for a particular embedding (any would do) and see that it is odd, then we will be done. The first is a simple counting argument and the second is an easy computation, both of which are good exercises for the readers. (See also the example below.)   
\end{proof}

As an example, consider the embedding $\alpha\colon  K_6\rightarrow\mathbb{S}^3$ given in Figure 3(a). 
\begin{figure}[h!]
\centering
\includegraphics[scale=0.5]{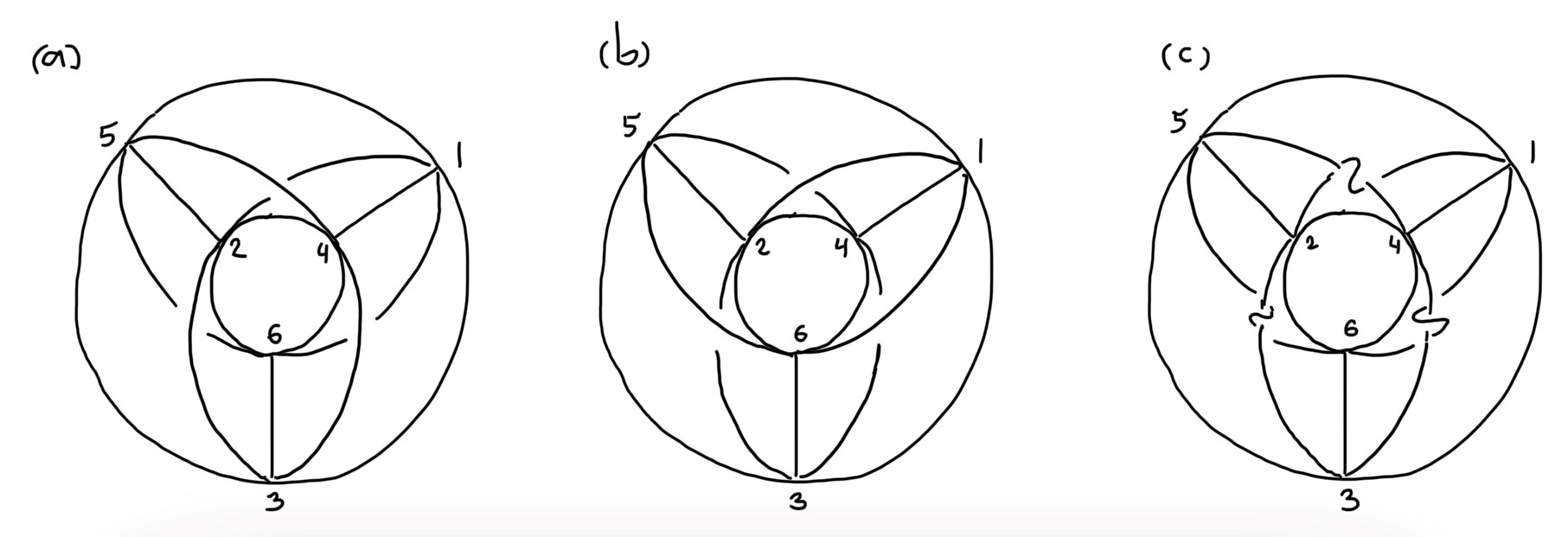}
\caption{Different embeddings of $K_6$ into $\mathbb{S}^3$.}
\end{figure}
The only pair of disjoint $3$-cycles with non-zero linking number, which is $\pm 1$ in this case, is $(x_1x_2x_3, x_4x_5x_6)$. With respect to a choice of orientations\footnote{Choices of orientations are made on $\mathbb{D}^4$ and on the $2$-simplices of $\Delta_6^2$, which will induce orientations on each $3$-cycle of $K_6$.}, suppose that the linking number $$\Lk (x_1x_2x_3, x_4x_5x_6) =1.$$ This means that for this embedding  $f\colon\Delta_6^2\rightarrow\mathbb{S}^4$, the van Kampen vector, as defined at the beginning of this section, is 
$$V_f = (1, 0, 0, ..., 0),$$ 
where the component that is equal to $1$ corresponds to the pair of $2$-simplices $x_1x_2x_3$ and  $x_4x_5x_6$. 
(Side note: van Kampen showed that each of the vectors $W_{F_iE_j}$ has components summing to an even number, therefore any PL immersion $g\colon \Delta_6^2\rightarrow\mathbb{S}^4$ must have nonzero van Kampen vector, which implies by van Kampen's method that $\Delta_6^2$ does not embed in $\mathbb{S}^4$. This is van Kampen's original proof of this fact.)

It turns out that the van Kampen vector of $\Delta_6^2$ can be equal to $(k,0,0,..., 0)$ for any odd integer $k$. This is a consequence of  the general fact that ``the van Kampen obstruction has order $2$", which means that, for a finite, simplicial $n$-complex $X^n$, if $V_f$ is the van Kampen vector associated with a PL immersion of a complex $f\colon X^n \rightarrow \mathbb{R}^{2n}$, then $2V_f$ is a linear combinations of finger move vectors $W_{F_i, E_j}$.

\subsection{The van Kampen obstruction has order $2$}
That the van Kampen obstruction has order $2$ follows from the fact that there is a PL immersion\footnote{An easy choice of $g$ is the composition of $f$ with a reflection of $\mathbb{R}^{2n}$.} $g\colon X^n \rightarrow \mathbb{R}^{2n}$ with $V_g = -V_f$. We will illustrate this via an example with the complex $\Delta_6^2$.

Consider the embedding $\beta\colon K_6\rightarrow \mathbb{S}^3$ given in Figure 3(b). With respect to the same choice of orientations from before, we have  $$\Lk (x_1x_2x_3, x_4x_5x_6) = -1,$$
and the linking numbers of all other pairs of disjoint $3$-cycles are equal to zero. That is, the van Kampen vector $$V_g = (-1, 0, 0, ..., 0).$$ 
By van Kampen's theorem, $(V_g - V_f)$ is a equal to a sum of ``finger move" vectors $W_{F_i, E_j}$, i.e. 
\[V_g - V_f = \sum_{(i,j)\in \Lambda} W_{F_i,E_j}. \]

An application of this is that if we apply the \emph{negatives} of the exact same collection of finger moves to $f$, then we will get a PL immersion $h$ with van Kampen vector 
\[V_h = V_f - \sum_{(i,j)\in \Lambda} W_{F_i,E_j} = V_f - (V_g - V_f ) = (3, 0, 0, ..., 0)\] 
If we keep applying the same collection of finger moves, we will get a PL immersion of $\Delta_6^2$ with van Kampen vector equal to $(k, 0, 0, ..., 0)$, for any odd integer $k$.

\subsection*{On Figure 3(c)} There is a very similar order-2 phenomenon that happens for embeddings of $K_6\rightarrow \mathbb{S}^3$. Note that the embedding $\beta \colon K_6 \rightarrow \mathbb{S}^3$ (given by Figure 3(b)) is obtained by composing the embedding $\alpha\colon K_6 \rightarrow \mathbb{S}^3$ (given by Figure 3(a)) with reflection. As expected, going from $\alpha$ to $\beta$, all the linking numbers between pairs of disjoint $3$-cycles change to their negatives. 

Alternatively, $\beta$ is obtained from $\alpha$ by doing $3$ crossing changes. We leave it to the readers to identify these crossing changes. Now, if we apply the negatives of these three crossing changes to Figure 3(a), we get Figure 3(c), which gives an embedding $\gamma \colon K_6 \rightarrow \mathbb{S}^3$. Note that with respect to $\gamma$ and the choices of orientations from before, the linking number $$\Lk (x_1x_2x_3, x_4x_5x_6) = 3,$$
and the linking numbers of all other pairs of disjoint $3$-cycles are equal to zero. In a similar manner, ``$3$" can be replaced by any odd integer if we apply enough times these three crossing changes.

\subsection{PL vs simplicial embeddings}\label{PLversusL}

In this subsection, we elaborate on why we can assume that a PL embedding of a $2$-complex in $\mathbb S^4$ is in the standard form described at the beginning of Section \ref{wlog}. The main technical point is to show the following.

\begin{proposition}
Any PL embedding of a $2$-complex $X$ in $\mathbb S^4$ can be replaced by a possibly different PL embedding for which the triangulation of $X$ extends to a triangulation of $\mathbb S^4$.
\end{proposition}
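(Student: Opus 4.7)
The hypothesis that $f\colon X \to \mathbb{S}^4$ is PL yields a simplicial subdivision $X'$ of $X$ and a triangulation $T$ of $\mathbb{S}^4$ for which $f\colon X' \hookrightarrow (\mathbb{S}^4, T)$ is a simplicial embedding; the only defect is that the triangulation induced on $f(X)$ from $T$ is the subdivision $X'$ rather than the original triangulation of $X$. The plan is to produce a new PL embedding $g\colon X \to \mathbb{S}^4$, obtained as a controlled modification of $f$, which is simplicial with respect to the original triangulation of $X$ and some triangulation of $\mathbb{S}^4$.

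I would construct $g$ by induction on the skeleta of $X$. On the $0$-skeleton set $g = f$. For each edge $e = [v_0, v_1]$ of $X$, replace the PL arc $f(e)$ by a single PL edge $g(e)$ joining $g(v_0)$ and $g(v_1)$, supported inside a thin regular neighborhood of $f(e)$. Such a neighborhood is a PL $4$-ball by the regular neighborhood theorem, and in a PL $4$-ball a single edge joining two prescribed boundary vertices always exists (e.g.\ a straight chord in a PL chart). Because $f$ is an embedding, the regular neighborhoods of distinct edges may be chosen pairwise disjoint away from their common vertices, so all edge-replacements can be performed simultaneously while keeping $g$ injective. Next, for each $2$-simplex $\sigma$ of $X$, the boundary $g(\partial\sigma)$ is a single triangular PL circle; replace $f(\sigma)$ by a single flat PL $2$-simplex $g(\sigma)$ spanning this boundary, supported inside a regular neighborhood of $f(\sigma)$. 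That neighborhood collapses onto the PL disk $f(\sigma)$ and is therefore a PL $4$-ball, and inside such a ball any prescribed PL triangular boundary curve on $\partial B^4$ bounds a single PL $2$-simplex in an appropriate triangulation of $B^4$ (for instance, cone the boundary curve to an interior vertex). Again these neighborhoods can be chosen pairwise disjoint away from the straightened edges, so the resulting $g(\sigma)$'s remain disjoint.

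The main obstacle is the $2$-simplex step. PL arcs have codimension $3$ in $\mathbb{S}^4$ and so can be straightened very flexibly, but PL $2$-disks have codimension only $2$ and straightening them inside the ambient $\mathbb{S}^4$ would be genuinely delicate (reflecting, for instance, the fact that $2$-disks in $4$-manifolds can be knotted in subtle ways). The key to sidestepping this difficulty is that I never need to straighten $f(\sigma)$ inside $\mathbb{S}^4$: I only need to work inside a regular neighborhood of $f(\sigma)$, which is an honest PL $4$-ball, and there the existence of a flat $2$-simplex with the prescribed triangular boundary is elementary. Once both inductive steps are complete, a triangulation of $\mathbb{S}^4$ extending the original triangulation of $X$ is assembled from the $g$-image of $X$, the triangulations of the various regular neighborhoods, and a compatible triangulation of the complementary region (which exists by standard PL techniques, since a PL subcomplex of $\mathbb{S}^4$ always has a triangulated complement).
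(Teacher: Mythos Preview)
There is a genuine gap in the $2$-simplex step. A minor point first: coning a triangular boundary curve to an interior vertex produces three $2$-simplices, not one, so your parenthetical example does not do what you claim. The more serious issue is that filling each $f(\sigma)$ by a flat $2$-simplex inside its own $4$-ball neighborhood does not control how adjacent fillings interact along a shared edge $e$. At an interior point $p$ of $e$, the link $Lk_X(p)$ is the suspension of the set of $2$-cells containing $e$, and for the triangulation of $X$ to extend to $\mathbb S^4$ this suspension must sit inside $Lk_{\mathbb S^4}(g(p))\cong\mathbb S^3$ as the suspension of an embedding of that finite set in $\mathbb S^2$. Your construction does not ensure this: even if each $g(\sigma)$ is individually locally flat, the arcs contributed by the various $g(\sigma)$ to $Lk_{\mathbb S^4}(g(p))$ can together form a knotted graph in $\mathbb S^3$. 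Your closing assertion that the triangulation extends ``by standard PL techniques, since a PL subcomplex of $\mathbb S^4$ always has a triangulated complement'' is precisely the statement in question, and it is false without further hypotheses --- this is exactly the local knottedness obstruction the proposition is meant to overcome.

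The paper takes a different and more targeted route. It names the obstruction (local knottedness at non-vertex points), removes it one point at a time, and then invokes a theorem of Akin asserting that a PL embedding which is locally unknotted away from the vertices admits an extension of the triangulation to $\mathbb S^4$. Concretely: if $x$ is a locally knotted non-vertex point lying in a simplex $\sigma$, join $x$ to a vertex $v$ of $\sigma$ by a segment $\gamma\subset\sigma$, take a small regular $4$-ball neighborhood $B$ of $\gamma$, and replace $f|_B$ by the cone on $f|_{\partial B}$. This strictly decreases the number of non-vertex locally knotted points without creating new ones; iterating finitely many times and then applying Akin's theorem finishes the argument. The advantage over your approach is that one never has to coordinate fillings of adjacent simplices --- the modification is local to a ball meeting $X$ in a cone, so the coned replacement is automatically locally unknotted at every new non-vertex point.
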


When $X=\Delta_6^2$ and $f \colon X \rightarrow \mathbb{S}^4$ is a simplicial embedding, by looking at the star $St_{S^4}(f(x_0))$, which is a ball that can be identified with the northern hemisphere, we see that $f$ satisfies condition (*). Since $f$ is simplicial, the images of the simplices of $X$ that do not contain $x_0$ cannot intersect with the open star of $f(x_0)$ (which is the open Northern hemisphere), so (**) is satisfied.

%think of $f(x_0)$ as the north pole, the star $St_{S^4}(f(x_0))$ as the northern hemisphere and $Lk_{S^4}(f(x_0))$ as the equator 
%the $1$-neighborhood of $X$ is the cone on $K_6$ 
%this embedding has the standard form described in Section \ref{wlog}. 

\begin{proof}
Suppose a $2$-complex $X$ embeds in $\mathbb S^4$ via a PL embedding $f:X\hookrightarrow \mathbb S^4$. We would like to extend the triangulation of $X$ to a triangulation of $\mathbb S^4$. However, this is not always possible. For instance, if there is an interior point $x$ of a $2$-cell in $X$ such that $Lk_X(x)$ is a knotted circle in $Lk_{S^4}(x)=\mathbb S^3$, 
\begin{figure}[h!]
\centering
\includegraphics[scale=0.20]{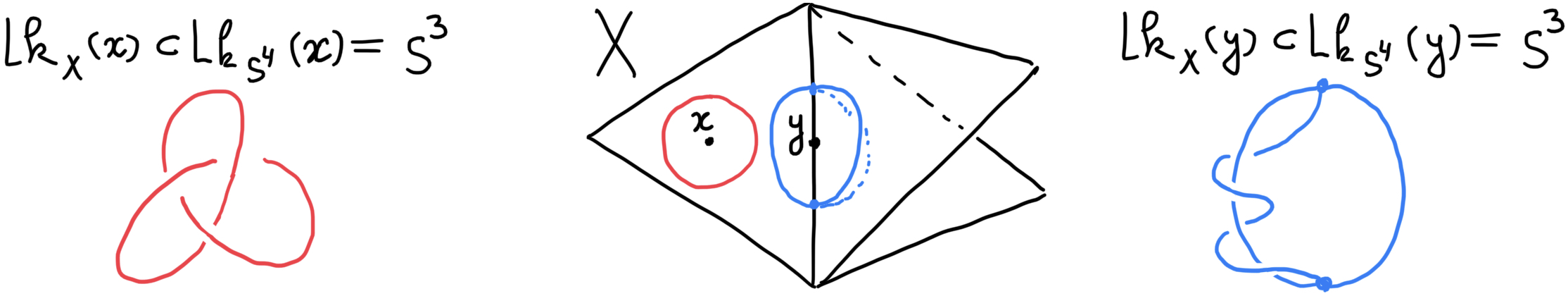}
\caption{Local knottedness in the interior of a $2$-simplex (left) and edge (right).}
\end{figure}
then the triangulation cannot possibly extend.\footnote{For an explicit example, PL triangulate a non-trivial knot $K \hookrightarrow \mathbb S^3$ and suspend to get a PL embedding $\mathbb S^2\hookrightarrow \mathbb S^4$ that is locally knotted at the two suspension points. If we now take a new triangulation of $\mathbb S^2$ in which the suspension points are interior points, then this new triangulation on $\mathbb S^2$ does not extend to $\mathbb S^4$.} To keep track of this phenomenon, we say that the embedding $f$ is {\it locally knotted} at the point $x\in X$ if $Lk_X(x)$ is a suspension of a finite set of points $F$ while the embedding $Lk_X(x)\hookrightarrow Lk_{\mathbb S^4}(x)=\mathbb S^3$ is not the suspension of an embedding of $F$ in $\mathbb S^2$. Figure 4 illustrates local knottedness.% in the interior of a $2$-simplex and on an edge.

If the embedding is simplicial, i.e. if $X$ is embedded as a subcomplex of a triangulation of $\mathbb S^4$, then the embedding can only be locally knotted at the vertices. %For a PL embedding, a subdivision of $X$ embeds as a subcomplex of a triangulation of $S^4$, so $f$ is locally knotted at finitely many points, but these points may not be vertices of $X$. 
Conversely, Akin showed (Theorem 2 of \cite{akin}) that if all non-vertex points of $X$ are locally unknotted, then the triangulation of $X$ extends to a triangulation of $\mathbb S^4$.

Now, observe that we can modify the PL embedding $f$ to a new PL embedding that is only locally knotted at the vertices. To see this, first note that since $f$ is PL, the number of locally knotted points is finite (since they occur as vertices of some subdivision of $X$). Suppose there is a locally knotted non-vertex point $x$ lying in some simplex $\sigma$ of $X$. 
\begin{figure}[h!]
\centering
\includegraphics[scale=0.17]{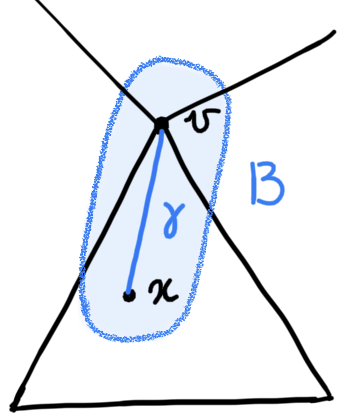}
\caption{The neighborhood $B$.}
\end{figure}
Connect $x$ to a vertex $v$ of $\sigma$ by a line $\gamma$ lying entirely in $\sigma$. Let $B$ be a small regular neighborhood of $\gamma$ in $\mathbb S^4$ (see Figure 5). 

Then $B$ is a PL $4$-ball. The boundary $\partial B=\mathbb S^3$ intersects $X$ in a graph PL homeomorphic to $Lk_X(v)$, and $B$ intersects $X$ in a $2$-complex PL homeomorphic to the cone on $Lk_X(v)$. Now, we modify the embedding $f$ on $B$ by replacing $f\mid_B$ with the cone on $f\mid_{\partial B}$. The resulting PL embedding $f':X\hookrightarrow \mathbb S^4$ has fewer `non-vertex' locally knotted points than $f$, so repeating the construction finitely many times produces a PL embedding $f'':X\hookrightarrow \mathbb S^4$ that is only locally knotted at vertices. For this embedding the triangulation of $X$ extends to $\mathbb S^4$ by Akin's theorem. 
\end{proof}

\begin{remark}
We can further arrange for the simplicial embedding $f$ so that for each vertex $v\in X\in \mathbb S^4$ the $1$-neighborhood (or link) of $v$ in $\mathbb S^4$ intersects $X$ in the $1$-neighborhood (or link) of $v$ in $X$, i.e. 
\begin{eqnarray*}
St_X(v)&=&St_{\mathbb S^4}(v)\cap X,\mbox{ and}\\
Lk_X(v)&=&Lk_{\mathbb S^4}(v)\cap X.
\end{eqnarray*}
This can be achieved by taking partial barycentric 
\begin{figure}[h!]
\centering
\includegraphics[scale=0.17]{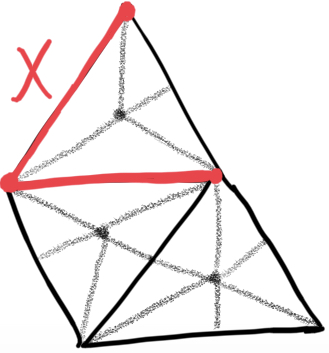}
\caption{Partial barycentric subdivision.}
\end{figure}
subdivisions of $\mathbb S^4$ (subdividing $\mathbb S^4$ but not $X$, see Figure 6) if necessary. 

%we get a triangulation of $\mathbb S^4$ containing $X$ as a subcomplex such that for a vertex $v\in X\in \mathbb S^4$ the closed star (or link) of $v$ in $\mathbb S^4$ intersects $X$ in the closed star (or link) of $v$ in $X$, i.e. 
%\begin{eqnarray*}
%St_X(v)&=&St_{\mathbb S^4}(v)\cap X,\mbox{ and}\\
%Lk_X(v)&=&Lk_{\mathbb S^4}(v)\cap X.
%\end{eqnarray*}

A similar statement holds when $v$ is replaced by an edge $e$ whose $1$-neighborhood $N_1(e,X)$ in $X$ satisfies the property that every triangle made of three edges in $N_1(e,X)$ must bound a $2$-simplex in $X$. In such a case, by taking subdivision on $\mathbb{S}^4$, one can arrange so that the $1$-neighborhood of $e$ in $\mathbb{S}^4$ is a ball $B$ whose intersection with $X$ is the $1$-neighborhood of $e$ in $X$, and in addition, the intersection $\partial B \cap X$ is the boundary of the $1$-neighborhood of $e$ in $X$. 

\end{remark}

\section{Some $2$-complexes with zero van Kampen obstructions (that do not embed)}\label{examples}
In this section we will explain why the van Kampen obstruction of $X_k$, for $k$ odd, is zero and why it does not PL embed in $\mathbb{S}^4$. The proof of Theorem \ref{main theorem} is given at the end of this section. But before talking about $X_k$, we will warm up with the Freedman-Krushkal-Teichner examples. There are many similarities between these two families of $2$-complexes, so we will first make a few simplications that apply to both.
\subsection{As a matter of simplification}\label{simplify} Let $\bowtie$ be the $2$-complex obtained by taking two copies $\Delta_6^2$ and $\hat{\Delta}_6^2$ of the $2$-skeleton of the $6$-simplex, inserting an edge connecting vertex $x_6$ to $\x_6$ and then removing the interior of the faces $x_4x_5x_6$ and $\x_4\x_5\x_6$.  Let $Y$ be a $2$-complex that contains $\bowtie$ as a subcomplex. Let us assume also that any cell in $Y$ that does not belong to $\bowtie$ does not contain $x_0$ or $\x_0$. Note that both $X_k$ and the Freedman-Krushkal-Teichner complexes are examples of $Y$.

We now set things up to prove Theorem \ref{main theorem} in Section \ref{proofoftheorem}, which we will do by contradiction. So suppose that $Y$ embeds in $\mathbb{S}^4$ and for simplicity in notations we will treat $Y$ as a subset of $\mathbb{S}^4$. Then if we attach to $Y$ a square along $3$ of its sides to $x_0x_6$, $x_6\x_6$ and $\x_6\x_0$ (see Figure 7),
\begin{figure}[h!]
\centering
\includegraphics[scale=0.47]{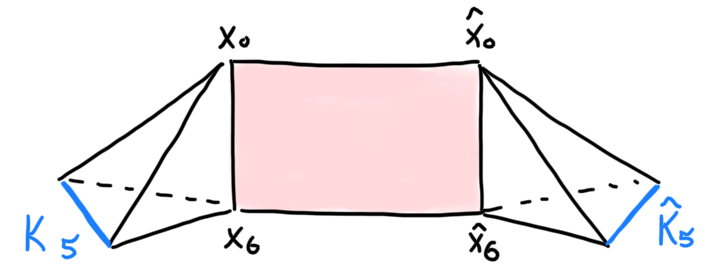}
\caption{The $1$-neighborhood of $x_0\x_0$.}
\end{figure}
then we will obtain a complex $Z$ that also embeds\footnote{If the square intersects nontrivially with any simplex of $Y$, we can always push the intersections off the square through the free edge $x_0\x_0$.} in $\mathbb{S}^4$. The complex $Z$ has a free edge, which is $x_0\x_0$, and it is important that we do not collapse it. 

By the discussion at the end of the previous section, we can assume without loss of generality that the restriction of the embedding to the simplicial $1$-neighborhood of $x_0\x_0$ in $Y$ is simplicial. Thus, the simplicial $1$-neighborhood of $x_0\x_0$ is contained in a closed, PL $4$-ball $B$, and the rest of $Y$ lies in the complement of $B$. So, the complement of the interior of $B$ is a closed, PL ball $\mathbb{D}^4$ which contains all the simplices of $Y$ with vertices in $\{x_1,..,x_6, \x_1,...,\x_6\}$. Moreover, the vertices $x_1,..,x_6, \x_1,...,\x_6$ and the edges connecting any two of them are in $\mathbb{S}^3 = \partial\mathbb{D}^4$, i.e. in $\mathbb{S}^3$ one sees two embedded complete graphs on six vertices, which we denote as $K_6$ and $\hat{K}_6$, and an edge $x_6\x_6$. Furthermore, thinking of $\mathbb{S}^3$ as the boundary of the $1$-neighborhood of the edge $x_0\x_0$, we see that $\mathbb{S}^3$ decomposes as a union 
\[\mathbb{D}^3 \;\cup\; (\mathbb{S}^2\times[0,1]) \;\cup \; \hat{\mathbb{D}}^3,
\]
where $\mathbb{D}^3$ and $\hat{\mathbb{D}}^3$ are disjoint PL $3$-balls in $\mathbb{S}^3$. We observe that $\mathbb{D}^3$ (respectively, $\hat{\mathbb{D}}^3$) contains $K_6$ (respectively, $\hat{K}_6$) and the edge $x_6\x_6$ lies in the tube $\mathbb{S}^2\times[0,1]$ (in fact, in a standard way). This observation will simplify a lot of things later so we summarize it as follows.

\begin{observation} Any PL embedding into $\mathbb{S}^4$ of a complex $Y$ induces a PL embedding $f$ into $\mathbb{D}^4$ of the subcomplex of $Y$ consisting of all simplices that do not contain $x_0$ or $\x_0$. Furthermore, the intersection of the image of $f$ with $\partial\mathbb{D}^4 = \mathbb{S}^3= \mathbb{D}^3 \;\cup\; (\mathbb{S}^2\times[0,1]) \;\cup \; \hat{\mathbb{D}}^3$ consists of an embedded $K_6 \subset \mathbb{D}^3$, an embedded $\hat{K}_6 \subset \hat{\mathbb{D}}^3$ and a standardly embedded edge $x_6\x_6$ in $\mathbb{S}^2\times[0,1]$. (See Fig. 8.)
\end{observation}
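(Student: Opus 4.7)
The plan is to extend the embedding of $Y$ in $\mathbb{S}^4$ to one of a slightly larger complex $Z \supset Y$ in which $x_0$ and $\x_0$ are connected by a free edge, and then apply the local PL tools of Section \ref{PLversusL} to that edge. The square from Section \ref{simplify} can be realized in $\mathbb{S}^4$ by choosing any PL arc from $x_0$ to $\x_0$ transverse to $Y$ to serve as the new fourth side, and then spanning the resulting quadrilateral by any PL $2$-disk (every simple closed curve in $\mathbb{S}^4$ is PL unknotted, so such a disk exists). Any interior intersections of the disk with $Y$ can be pushed off across the free edge $x_0\x_0$, as noted in Section \ref{simplify}.

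Next, I would subdivide $Z$ so that every $3$-cycle of edges in $N_1(e, Z)$ bounds a $2$-simplex of $Z$, where $e = x_0\x_0$; this is the hypothesis of the remark at the end of Section \ref{PLversusL}. The only potentially offending cycles in $\bowtie$ are the boundaries of the two removed faces $x_4x_5x_6$ and $\x_4\x_5\x_6$. Adding midpoints $m \in x_4x_5$ and $\hat m \in \x_4\x_5$ and stellar-subdividing the four $2$-simplices $x_i x_4 x_5$ with $i \in \{0,1,2,3\}$ (and analogously on the hat side) destroys these cycles without creating new non-bounding ones, and the additional cells of $Y$, which by assumption avoid $x_0$ and $\x_0$, do not contribute edges to $N_1(e, Z)$. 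The remark then produces a PL $4$-ball $B \subset \mathbb{S}^4$ with $B \cap Z = N_1(e, Z)$ and $\partial B \cap Z = \partial N_1(e, Z)$. Setting $\mathbb D^4 := \overline{\mathbb{S}^4 \setminus B}$ yields the required PL $4$-ball containing every simplex of $Y$ disjoint from $\{x_0, \x_0\}$, and restriction of the given embedding to that subcomplex gives $f$.

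Since $B$ is a regular neighborhood of a PL arc, it is PL homeomorphic to $I \times D^3$, and its boundary splits in the product structure as $\mathbb D^3 \cup (\mathbb S^2 \times [0,1]) \cup \hat{\mathbb D}^3$: the two end caps are the cross-sections at $x_0$ and $\x_0$, and the cylinder is the lateral tube. A direct computation of $\partial N_1(e, Z)$ then identifies $f(Y) \cap \partial B$. The link of $x_0$ in $Y$ is exactly the complete graph $K_6$ on $x_1,\dots,x_6$ (the removed face $x_4x_5x_6$ does not touch $x_0$, so does not affect this link), so its subdivided copy sits inside $\mathbb D^3$; symmetrically, $\hat K_6 \subset \hat{\mathbb D}^3$. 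The only remaining edge in $\partial N_1$ is $x_6\x_6$: it has one endpoint in $\St(x_0)$ and the other in $\St(\x_0)$, so it runs along the lateral tube, meeting each $\mathbb S^2$-slice in a single transverse point and is standardly embedded.

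The principal technical point is the subdivision step — verifying that the remark's hypothesis can be met by destroying the bad $3$-cycles $\partial x_4x_5x_6$ and $\partial \x_4\x_5\x_6$ without creating new ones, and checking that the triangulation chosen for the added square introduces no further problematic cycles. Given the explicit combinatorics of $\bowtie$ and the hypothesis on $Y$, this is routine but needs to be spelled out carefully; everything else is a direct application of the earlier local PL machinery.
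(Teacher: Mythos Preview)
Your approach is essentially the same as the paper's: add the square to get $Z$ with the free edge $x_0\hat x_0$, invoke the PL machinery of Section~\ref{PLversusL} (specifically the remark at the end) to get a ball $B$ around that edge, and read off the decomposition of $\partial B$. The paper's justification is the paragraph immediately preceding the Observation, which simply appeals to ``the discussion at the end of the previous section'' without further comment; you have been more careful in noticing that the hypothesis of that remark (every triangle of edges in $N_1(e,Z)$ bounds a $2$-simplex) is not literally satisfied because the faces $x_4x_5x_6$ and $\hat x_4\hat x_5\hat x_6$ were removed, and you propose a stellar subdivision to fix this. That is a genuine detail the paper glosses over, and your fix works: after inserting the midpoint $m\in x_4x_5$, the only edges through $m$ lying in $\overline{\St}(x_0)$ are $x_0m,\,x_4m,\,mx_5$, so no new unfilled $3$-cycles appear, while the offending cycle $x_4x_5x_6$ disappears since $x_4x_5$ is no longer an edge.
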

\begin{figure}[h!]
\centering
\includegraphics[scale=0.3]{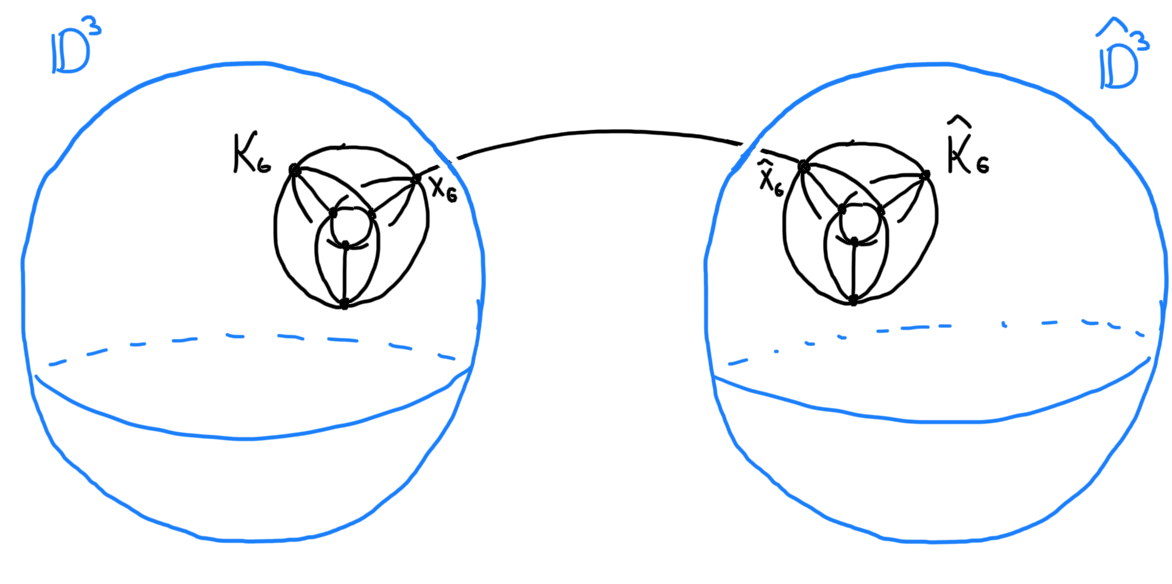}
\caption{View from $x_0\x_0$ of the the boundary of its $1$-neighborhood.}
\end{figure}

It follows from this observation that the fundamental group of the complement $$C\colon=\mathbb{S}^3 - (x_1x_2x_3 \cup \x_1\x_2\x_3)$$ of the two embedded loops $x_1x_2x_3$ and $\x_1\x_2\x_3$ decomposes as a free product
\[\pi_1(C) = Z*\hat{Z},\]
where $Z = \pi_1(\mathbb{D}^3 - x_1x_2x_3)$ and $\hat{Z} = \pi_1(\hat{\mathbb{D}}^3 - \x_1\x_2\x_3)$. 

The abelianization map on each factor of $Z*\hat{Z}$ induces a homomorphism to the free group $F_2$ on two generators $a$ and $b$
\[ \eta\colon Z * \hat{Z} \longrightarrow \mathbb{Z}* \mathbb{Z} = F_2 = \langle a, b\rangle.\]
Note that $a$ is the image of the element of $Z$ represented by a small loop that goes around $x_1x_2x_3$, and $b$ is the image of the element of $\hat{Z}$ represented by a small loop that goes around $\x_1\x_2\x_3$.

\subsection*{Useful fact} Let $W$ be the complement of the two $2$-simplices $x_1x_2x_3$ and  $\x_1\x_2\x_3$ in $\mathbb{D}^4$. Then, the inclusion $C\hookrightarrow W$ induces the following isomorphisms of nilpotent quotients\footnote{For a group $G$, let $G_{1} = G$ and let $G_{i+1} = [G, G_{i}]$ for $i = 1, 2, ...$. Then $G/G_n$ is the $n$-th nilpotent quotient of $G$.} 
\[\pi_1(W)/(\pi_1(W)_{n})\;\cong \;\pi_1(C)/(\pi_1(C)_{n}), \]
for all $n = 1,2,3,...$, by a theorem of Stallings (Theorem 5.1 in \cite{Stallings}) and Alexander duality, as follows. Stallings's theorem says that if a map $C\rightarrow W$ between cell-complexes induces an isomorphism $H_1(C) \rightarrow H_1(W)$ and an epimorphism $H_2(C)\rightarrow H_2(W)$, then it induces an isomorphism on all nilpotent quotients $\pi_1(C)/(\pi_1(C)_{n})\;\cong \;\pi_1(W)/(\pi_1(W)_{n})$. Now, in our situation, Alexander duality implies that $H_2(W) = 0$ (thus, the induced map on second homology is onto) and the inclusion $C\hookrightarrow W$ induces an isomorphism $H_1(C) \cong H_1(W)$.  Therefore, the hypotheses in Stallings' theorem are satisfied, so the conclusion follows.

\subsection{The examples of Freedman-Krushkal-Teichner}\label{FKT}
The $2$-complex $K$ given by Freedman-Krushkal-Teichner is obtained by attaching to $\bowtie$ a $2$-cell $D^2$ via the attaching map $\psi$ given by a nontrivial element $w$ in the commutator subgroup of the free group $\langle x_4x_5x_6, \x_4\x_5\x_6\rangle$ on two generators which are the loops $x_4x_5x_6$ and $\x_4\x_5\x_6$ (we would like to remind the readers that in talking about fundamental groups, we will, for convenience, use a ``base interval", i.e. $x_6\x_6$, instead of using a base point.) That is,
\[K \;= \;\bowtie \;\cup_\psi \;D^2.\]

%We will first explain why $K$ does not PL embed in $\mathbb{S}^4$ and then explain why the van Kampen obstruction of $K$ is zero.

\subsubsection{$K$ does not embed in $\mathbb{S}^4$}
Suppose that there is PL embedding $f \colon K \rightarrow\mathbb{S}^4$. Since $K$ contains a copy of $\bowtie$ and the only other cell in $K$ that is not included in $\bowtie$ is $D^2$, which does not contain $x_0$ or $\x_0$, the above observation applies. It follows that $f\circ\psi$ must be trivial in $\pi_1(W)$. Therefore, $f\circ\psi$ must be trivial in the $n$-th nilpotent quotient $\pi_1(W)/(\pi_1(W)_{n})$ of $\pi_1(W)$ for all $n \in\mathbb{Z}^+$, and thus by the above Fact, $f\circ\psi$ must be trivial in $\pi_1(C)/(\pi_1(C)_{n})$ for all $n \in\mathbb{Z}^+$.

Now, the homomorphism $\eta \colon \pi_1(C) \longrightarrow F_2 =\langle a, b\rangle$ 
discussed above induces homomorphisms $$\eta_n \colon  \pi_1(C)/(\pi_1(C)_{n})\longrightarrow F_2/(F_2)_n$$ 
on nilpotent quotients. Hence, the image of $f\circ \psi$ under $\eta_n$ must be trivial in the $n$-th nilpotent quotient of $F_2$ for all $n\in \mathbb{Z}^+$. However, we claim that this is impossible for the following reason.

By the Conway-Gordon theorem, there must be a pair of disjoint $3$-cycles in $K_6$ with mod $2$ linking number (with respect to the embedding given by $f$) equal to $1$. Since all $3$-cycles in $K_6$, except for $x_4x_5x_6$, bound disjoint disks in $\mathbb{D}^4$, it follows that $\Lk(x_1x_2x_3, x_4x_5x_6) = 1$ mod $2$. Similarly, we see that $\Lk(\x_1\x_2\x_3, \x_4\x_5\x_6) = 1$ mod $2$. Since $K_6$ and $\hat{K}_6$ are contained in disjoint $3$-balls in $\mathbb{S}^3$, it follows that in $\pi_1(C)$, we have
\[ \eta (x_4x_5x_6) = a^r, \quad \eta (\x_4\x_5\x_6) = b^s,\]
for some odd integers $r$ and $s$. In particular, $r, s \ne 0$. Hence, the image of $f\circ \psi$ under $\eta$ is $w(a^r,b^s)$. Since the word $w(a,b)$ is non-trivial, it is non-trivial in some nilpotent quotient $F_2/(F_2)_n$. Stalling's rational theorem (7.3 in \cite{Stallings}) implies that the map $F_2\ra F_2$ for which $a\mapsto a^r$ and $b\mapsto b^s$ induces inclusions of nilpotent quotients $F_2/(F_2)_n\hookrightarrow F_2/(F_2)_n$, so the word $w(a^r,b^s)$ is also non-trivial in $F_2/(F_2)_n$.
%a commutator in $a^r$ and $b^s$, and therefore is nontrivial in the $n$-th nilpotent quotient $F_2/(F_2)_n $ for $n$ large enough. 
Thus, $K$ does not PL embed in $\mathbb{S}^4$.

\subsubsection{$K$ has zero van Kampen obstruction}\label{obstruction of K}
To see that the van Kampen obstruction of $K$ is zero, we exhibit an immersion $ f\colon K\rightarrow \mathbb{S}^4$ for which the van Kampen vector $V_f$ is equal to $0$. 

First, we embed $\bowtie$ into $\mathbb{S}^4$ the obvious way. For the sake of concreteness, we can, for example, when $k=3$ use the embedding in Figure 3(c) to embed $K_6$ and $\hat{K}_6$ in disjoint balls in $\mathbb{S}^3$, and then connect $x_6$ and $\x_6$ by a straight line segment outside these balls. (For general $k$, we just need to add more ``twists" to Figure 3(c) so that $\Lk(123,456) =k$.) Thinking of $\mathbb{S}^3$ as the equator of $\mathbb{S}^4$, we can, in the Northern hemisphere, cone off $K_6$ to a point $f(x_0)$ and cone off $\hat{K}_6$ to a point $f(\x_0)$ in such a way that the two cones are disjoint. Then we fill in all $3$-cycles in $K_6$ and $\hat{K}_6$, except for $x_4x_5x_6$ and $\x_4\x_5\x_6$ with disks in the Southern hemisphere $\mathbb{D}^4$.

Up to this point, we see that the components of the van Kampen vector $V_f$ corresponding to disjoint pairs of $2$-simplices $\sigma_1, \sigma_2$ in $\bowtie$ are equal to $0$ for the following reason. If either $\sigma_1$ or $\sigma_2$ contains $x_0$ or $\x_0$, then their images are clearly disjoint. Otherwise, $\sigma_1$ and $\sigma_2$ bound $3$-cycles in $K_6$ or $\hat{K}_6$ that are not $x_4x_5x_6$ or $\x_4\x_5\x_6$, so the pair of $3$-cycles must have zero linking number, which means that the intersection number of the images of $\sigma_1$ and $\sigma_2$ under $f$ must be zero. 

Next, we map in the left over cell $D^2$ of $K$ into the Southern hemisphere $\mathbb{D}^4$. Note that for this immersion of $D^2$, there are only two components of the van Kampen vector that we need to worry about, namely those corresponding to the pair $(D^2, x_1x_2x_3)$ and the pair $(D^2, \x_1\x_2\x_3)$. Since the attaching map of $D^2$ is a commutator in $\pi_1(\mathbb{S}^3 -  x_1x_2x_3 -  \x_1\x_2\x_3)$, it must be trivial both in $H_1(\mathbb{S}^3 -  x_1x_2x_3)$ and $H_1(\mathbb{S}^3 -  \x_1\x_2\x_3)$, which implies that both the linking numbers $\Lk(\partial D^2, x_1x_2x_3)$ and $\Lk(\partial D^2, \x_1\x_2\x_3)$ are equal to zero. Therefore, the van Kampen vector is equal to zero.

\subsection{The complex $X_k$}\label{proofoftheorem}
The complex $X_k$, as defined in Section \ref{intro}, is obtained by attaching to $\bowtie$ a copy of $(P_k-D^2)$ via an attaching map $\varphi\colon \partial (P_k-D^2) \rightarrow\; \bowtie$ given by the loop $\gamma = x_4x_5x_6 * \x_4\x_5\x_6$.
\[X_k \;= \;\bowtie \;\cup_\varphi \;(P_k-D^2).\]

\subsection*{Proof of Theorem \ref{main theorem}} The rest of this section is devoted to proving Theorem \ref{main theorem}.

\subsubsection{The complex $X_k$ does not PL embed in $\mathbb{S}^4$ for $k>1$}\label{noembedd}
Suppose that there is a PL embedding $f\colon X_k \rightarrow \mathbb{S}^4$ and that $k>1$. Again, the observation in Section \ref{simplify} applies. Also, as we saw in Section \ref{FKT}, the only nonzero linking numbers between pairs of disjoint $3$-cycles in $K_6$ or $\hat{K}_6$ are $\Lk(x_1x_2x_3, x_4x_5x_6) = \Lk(\x_1\x_2\x_3, \x_4\x_5\x_6) = 1$ mod $2$.

%\subsubsection*{Claim} Both $\Lk(x_1x_2x_3, x_4x_5x_6)$ and $\Lk(\x_1\x_2\x_3, \x_4\x_5\x_6)$ must be divisible by $k$. 

%\begin{proof}[Proof of claim]
%To see this, let $\alpha$ be the loop that is the singular set of $P_k$. Then $\beta\colon =\partial (P_k-D^2)$ is homotopic to $(k\cdot\alpha)$. So with respect to $f$, the homology cycle represented by $\beta$, which is an element of  $H_1(W;\mathbb{Z})$, is divisible by $k$. Now, $$[\beta] = [x_4x_5x_6] + [\x_4\x_5\x_6],$$
%and note that the inclusion $C\hookrightarrow W$ induces an isomorphism $H_1(W;\mathbb{Z}) \cong H_1(C; \mathbb{Z})$. Therefore, $[x_4x_5x_6] + [\x_4\x_5\x_6]$ is divisible in $ H_1(C; \mathbb{Z})$, which is isomorphic to $\mathbb{Z}^2 = \langle a\rangle \oplus \langle b\rangle$. (Recall that $a$ is represented by a small loop that goes around $x_1x_2x_3$, and $b$ is represented by a small loop that goes around $\x_1\x_2\x_3$.)

%Since $K_6$ and $\hat{K}_6$ lie in disjoint balls in $\mathbb{S}^3$, it follows that $[x_4x_5x_6] = r\cdot a$ and $[\x_4\x_5\x_6] = s\cdot b$. Therefore, in order for the sum to be divisible by $k$, we must have that both $r$ and $s$ are divisible by $k$. By noting that $r = \Lk(x_1x_2x_3, x_4x_5x_6)$ and $s = \Lk(\x_1\x_2\x_3, \x_4\x_5\x_6)$, we deduce the claim.
%\end{proof}
Let $\alpha$ be the loop that is the singular set of $P_k$ and let $\beta\colon =\partial (P_k-D^2)$. 
\begin{itemize}
\item[-] As an element in $\pi_1(W)$, the loop $f(\beta)$ is a $k$-th power since $\beta$ is homotopic to $k$ times $\alpha$.
\item[-] As an element in $\pi_1(C)$, the loop $f(\beta) = (x_4x_5x_6) * (\x_4\x_5\x_6),$
and since $K_6$ and $\hat{K}_6$ lie in disjoint balls in $\mathbb{S}^3$, it follows that 
\[ \eta (f(\beta)) = a^r\cdot b^s \;\in\; F_2,\]
for some $r, s = 1$ (mod $2$). In particular, $r, s \ne 0$, so $(a^r\cdot b^s)$ is not the $k$-th power in $F_2$ since $k>1$. \footnote{Hint: Look at how $(a^r\cdot b^s)$ acts on the Cayley graph of $F_2$ and its axis of translation, which goes through $1$ and $(a^r\cdot b^s)$. If $(a^r\cdot b^s) = g^k$ for some $k>1$, then the axis of translation of $g$ must coincide with that of $(a^r\cdot b^s)$. Then one should notice there is something strange about this situation.}
\end{itemize}
If $\pi_1(C)$ and $\pi_1(W)$ were isomorphic, then we would obtain a contradiction and be done, but they are not. However, recall that the inclusion $C\hookrightarrow W$ induces isomorphism on nilpotent quotients
\[\pi_1(W)/(\pi_1(W)_{n})\;\cong \;\pi_1(C)/(\pi_1(C)_{n}) \stackrel{\eta_n}\longrightarrow F_2/(F_2)_n = \langle a,b \rangle /(\langle a,b \rangle )_n, \]
for all $n \in \mathbb{Z}^+$. Thus, we would obtain a contradiction if there is a $n$ such that $(a^r\cdot b^s)$ is not a $k$-th power in $(F_2/(F_2)_n)$. This turns out to be the case for large enough $n$ by a theorem of Baumslag (\cite{Baumslag}).

\begin{theorem}[Baumslag, Proposition 2 in \cite{Baumslag}]
Let $p$ be a prime. Let $F$ be a free group and let $w$ be an element of $F$ which is not a $p$-th power. Then there is a finite $p$-group $G$ and a homomorphism $\theta\colon F \rightarrow G$ such that $\theta (w)$ is not a $p$-th power in $G$.
\end{theorem}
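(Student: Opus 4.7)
The plan is to realize the desired finite $p$-group as a quotient of $F$ by a term of its lower $p$-central (Zassenhaus) series. I would first reduce to the case where $F$ has finite rank, since $w$ involves only finitely many generators. Then consider the filtration $F = D_1 \supseteq D_2 \supseteq \cdots$ with $D_k = \prod_{ip^j \geq k} \gamma_i(F)^{p^j}$. Each $F/D_k$ is a finite $p$-group, and a classical theorem of Magnus on the residual $p$-finiteness of free groups gives $\bigcap_k D_k = \{1\}$, so $F$ embeds into the pro-$p$ completion $\widehat{F}_p := \varprojlim_k F/D_k$.

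It would suffice to show that if $w$ is not a $p$-th power in $F$, then $w$ is not a $p$-th power in $F/D_k$ for some $k$. I would argue by contradiction: if $w$ were a $p$-th power in every $F/D_k$, a compactness/diagonal argument in the compact group $\widehat{F}_p$ would produce an element $g \in \widehat{F}_p$ with $g^p = w$. The goal then becomes to show that any such $p$-th root of an element of $F$ inside $\widehat{F}_p$ must itself lie in $F$, contradicting the hypothesis on $w$.

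The natural tool for this last step is the mod-$p$ Magnus expansion $\mu\colon F \hookrightarrow \mathbb{F}_p\langle\!\langle X_1,\ldots,X_n\rangle\!\rangle^{\times}$ defined by $\mu(x_i) = 1 + X_i$, which is injective by Magnus's theorem, identifies $D_k$ with $\mu^{-1}(1+I^k)$ (where $I$ is the augmentation ideal), and extends continuously to an embedding of $\widehat{F}_p$ into the units of the completed free associative $\mathbb{F}_p$-algebra. In characteristic $p$ one has strong control over $p$-th powers in this algebra via the ``Frobenius-like'' behavior on the associated graded (the $p$-restricted Lie algebra structure on $\mathrm{gr}(\widehat{F}_p)$): a degree-by-degree comparison of the leading terms of $\mu(g)$ against those of $\mu(w)$ should force $g$ to coincide with an element of $F$, which is the desired contradiction.

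The hard part is making this last step precise: pinning down how $p$-th powers interact with the $D_k$-filtration so as to force $p$-th roots in $\widehat{F}_p$ of elements of $F$ to already lie in $F$. An alternative route, likely closer to Baumslag's original argument, would bypass the pro-$p$ completion entirely and construct, for each $w$ that is not a $p$-th power in $F$, an explicit nilpotent finite $p$-group quotient obtained from a sufficiently high truncation of the Magnus expansion, together with a direct algebraic degree analysis showing that the image of $w$ in this quotient is not a $p$-th power.
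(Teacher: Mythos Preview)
The paper does not prove Baumslag's theorem; it is quoted as an external result (Proposition~2 of \cite{Baumslag}) and simply invoked. What the paper does supply is, in a subsequent remark, an explicit verification for the single word $w=a^rb^s$ actually needed in the application: it writes down by hand a finite $p$-group $G=(\mathbb Z/p)^{p^j}\rtimes\mathbb Z/p^{j+1}$ (with $\mathbb Z/p^{j+1}$ cyclically permuting coordinates) and a homomorphism $\theta$ sending $a,b$ to specific elements, then checks directly that $\theta(a^rb^s)$ is not a $p$-th power. So for the purpose of the paper there is no general proof to compare your proposal against, only this ad hoc construction for one word.

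Your outline is a standard and reasonable reformulation---reduce to finite rank, use the $p$-lower central filtration so that $F/D_k$ are finite $p$-groups, and by compactness deduce that if $w$ were a $p$-th power modulo every $D_k$ then $w$ would be a $p$-th power in the pro-$p$ completion $\widehat F_p$. The difficulty, as you yourself flag, is the final step: showing that an element of $F$ which is a $p$-th power in $\widehat F_p$ is already a $p$-th power in $F$. This is exactly the content of Baumslag's theorem, so at this point the argument is circular unless you actually carry out the Magnus-expansion analysis you allude to. That analysis is not routine: in the noncommutative power-series ring $\mathbb F_p\langle\!\langle X_1,\dots,X_n\rangle\!\rangle$ the Frobenius does \emph{not} act termwise ($ (1+X)^p\neq 1+X^p$ in general), so ``Frobenius-like behavior on the associated graded'' needs a precise statement and proof before it can force $g\in F$. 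As written, the proposal is a plan with the key lemma missing; by contrast, the paper's remark sidesteps all of this for the one word it needs by a two-line computation in an explicit metabelian $p$-group.
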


All $p$-groups are nilpotent. Therefore, $X_k$ does not PL embed in $\mathbb{S}^4$ for $k>1$.

\subsection*{A few words about words} We would like to make the following three remarks.

\begin{remark}
The splitting $\pi_1(C)\cong Z*\hat Z$ is crucial to our approach. If we merely knew that Lk$(x_1x_2x_3,\hat x_4\hat x_5\hat x_6)= \Lk(\hat x_1\hat x_2\hat x_3,x_4x_5x_6)=0$ and Lk$(x_1x_2x_3, x_4 x_5 x_6)= \Lk(\hat x_1\hat x_2\hat x_3,\x_4\x_5\x_6)= 1$ mod $2$ as in \cite{FKT}, then we would only be able to conclude that $f(\beta)$ is the image under a map $F_2\ra\pi_1(C)$ of a product of $r$ conjugates of $a$ followed by $s$ conjugates of $b$. This is not good enough since such a product can be a $k$-th power, e.g. for $k=3$,
$$[a(b^{-1}ab)(b^{-1}ab)]\;*\;[(aba^{-1})(a^2ba^{-2})(aba^{-1})]\; =\;(ab^{-1}a)(ab)^3(ab^{-1}a)^{-1}.$$   
\end{remark}

\begin{remark}
In the free group $F_2$ the product $a^3b^3$ fails to be a cube by a product of three commutators 
$$
a^3b^3\; =\; (ab)^3\; *\; (ab)^{-1}\left([(ab)^{-1},[b^{-1},a]] \;*
\;[b^{-1},a]\;*\;[b^{-2},a]\right)(ab),
$$
so if we replace $P_3$ by its connect sum with a genus $3$ surface $\Sigma_3$ then our method does not obstruct attempts to embed $X_3\#\Sigma_3$ (where the connect sum is made at an interior point of $P_3$) into $\mathbb S^4$. %In fact, one can use the above expression to show that $X_3\#\Sigma_3$ almost embeds in $\mathbb S^4.$
\end{remark}

\begin{remark}
We can be completely explicit about the group $G$ and the homomorphism $\theta$ in Baumslag's theorem for our word $a^rb^s$. Factor $r=p^im$ and $s=p^jn$ where $m,n$ are prime to $p$ and without loss of generality $i\leq j$. Take $G$ to be the semidirect product $(\mathbb Z/p)^{p^j}\rtimes\mathbb Z/p^{j+1}$ where $\mathbb Z/p^{j+1}$ acts by cyclically permuting the coordinates of $(\mathbb Z/p)^{p^j}$. (So multiples of $p^j$ in $\mathbb Z/p^{j+1}$ act trivially.) If we set
\begin{eqnarray*}
\theta(a)&=&((1,0,\dots,0),-p^{j-i}n),\\
\theta(b)&=&((0,\dots,0),m),
\end{eqnarray*}
then a direct computation shows that $\theta(a^rb^s)=(v,0)$ for a {\it non-zero} vector $v\in(\mathbb Z/p)^{p^j}$. Suppose this is a $p$-th power, say $(v,0)=(u,k)^p$. Then $kp=0$ modulo $p^{j+1}$ so $k$ is a multiple of $p^j$ and hence commutes with $u$ in the semidirect product. But then $(v,0)=(u,k)^p=(pu,pk)=(0,0)$ contradicting the fact that $v$ is non-zero. So $\theta(a^rb^s)$ is not a $p$-th power.
\end{remark}

\subsubsection{The van Kampen obstruction of $X_k$ is $0$ for odd $k$}\label{obstruction of X_k}

We will exhibit a PL immersion $f\colon X_k\rightarrow \mathbb{S}^4$ with van Kampen obstruction equal to zero when $k$ is odd. 

First, we PL embed $\bowtie$ into $\mathbb{S}^4$ by embedding  $K_6$ and $\hat{K}_6$ in disjoint balls in $\mathbb{S}^3$ using for each the embedding like the one in Figure 3(c) but with more ``twists" so that $$\Lk(x_1x_2x_3,x_4x_5x_6) = \Lk(\x_1\x_2\x_3,\x_4\x_5\x_6) =k,$$ and then connecting $x_6$ and $\x_6$ by a straight line segment outside these balls. Then we cone off $K_6$ to a point $f(x_0)$ and cone off $\hat{K}_6$ to a point $f(\x_0)$ in the Northern hemisphere in such a way that the two cones are disjoint. Then we fill in all $3$-cycles in $K_6$ and $\hat{K}_6$, except for $x_4x_5x_6$ and $\x_4\x_5\x_6$ with disks in the Southern hemisphere $\mathbb{D}^4$. The components of the van Kampen vector $V_f$ corresponding to the pairs of disjoint simplices in $\bowtie$ are clearly zero.

Next, in two steps we immerse into $\mathbb{D}^4$ the $(P_k-D^2)$, which we think of as a CW complex with one $2$-cell $B$ attached to its $1$-skeleton that is the dumbbell $\delta$ as in Figure 9. 
\begin{figure}[h!]
\centering
\includegraphics[scale=0.3]{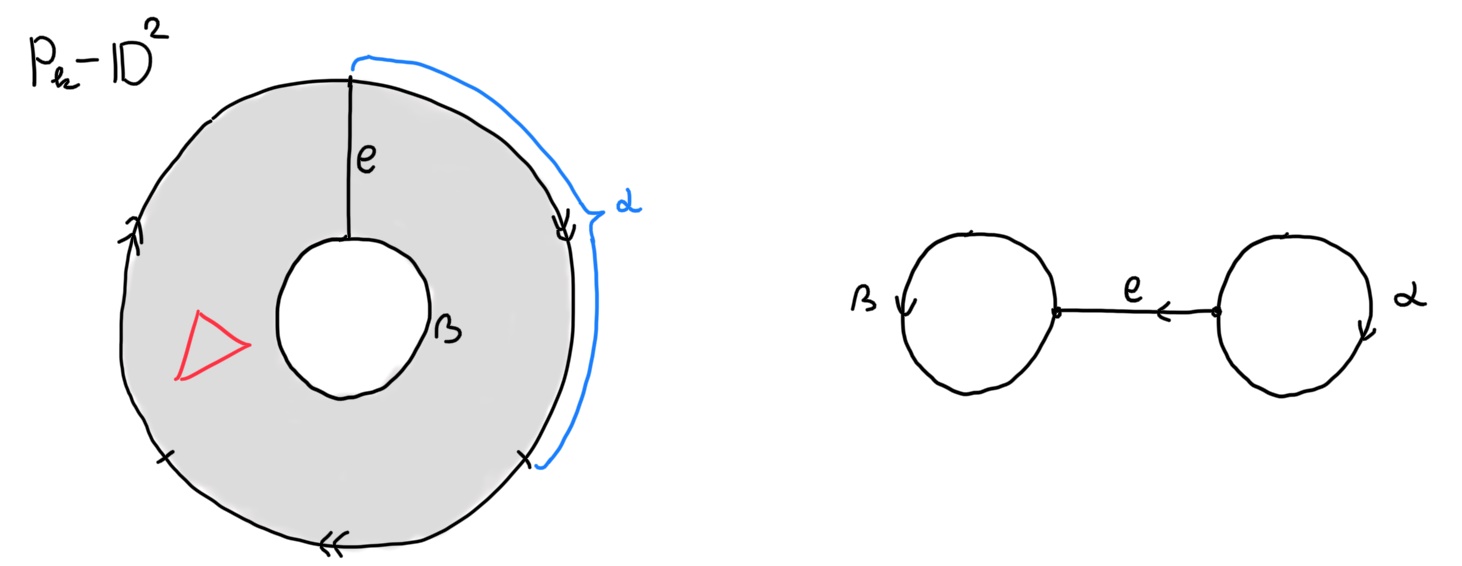}
\caption{Left picture: $P_k$ for $k=3$. Right picture: its $1$-skeleton $\delta$.}
\end{figure}
That is, $\delta = \alpha \cup \beta \cup e$. In the first step, we extend $f$ to the dumbbell $\delta$ by mapping it into $\mathbb{S}^3$ so that
\begin{itemize}
\item[.]  $\beta$ wraps around the loop $(x_4x_5x_6 * \x_4\x_5\x_6)$, 
\item[.] the edge $e$ is mapped to a point, and 
\item[.] $\alpha$ wraps around  $(a*b)^{-1}$, where $a$ is a small loop that goes around $x_1x_2x_3$ in the same direction as $x_4x_5x_6$, and $b$ is a small loop that goes around $\x_1\x_2\x_3$ in the same direction as $\x_4\x_5\x_6$. That is, $x_4x_5x_6 = ka $ and $\x_4\x_5\x_6 = kb$ in homology (and also fundamental group).
\end{itemize} 
Then in the second step, we extend $f$ to $B$ by filling in the image of its boundary $\partial B = \alpha^3*e*\beta*e^{-1}$ with a $2$-disk in $\mathbb{D}^4$. The map we have just constructed is not generic, but we can perturb it slightly to obtain a generic immersion. 

Now we compute the van Kampen vector $V_f$. Though $(P_k-D^2)$ is not a simplex, practically this causes no complication. (See the discussion ``Triangulations versus regular regions" in Section \ref{vKobstruction}.) We can designate the red triangle $T$ in Figure 9 to be the representative simplex of the regular region $(P_k-D^2)$ and we can assume that all intersections between $(P_k-D^2)$ and other simplices are contained in this triangle $T$. The boundary of $T$ is isotopic to $\partial B$ and we can harmlessly assume that the restriction of $f$ to $\partial T$ is very close to its restriction to $\partial B$. Now, we have
$$f(\partial B) = f(\alpha^k*e*\beta*e^{-1}) = (a*b)^{-k}*a^k*b^k,$$
which is trivial in $H_1(C) = H_1(\mathbb{S}^3 - x_1x_2x_3-\x_1\x_2\x_3)$. Therefore, the linking numbers $$\Lk(\partial B, x_1x_2x_3) = 0, \quad \Lk(\partial B, \x_1\x_2\x_3) = 0,$$
which implies that components of $V_f$ corresponding to the pair $(B,x_1x_2x_3)$ and $(B, \x_1\x_2\x_3)$ are equal to $0$. By construction $f(B)$ is disjoint from the images of all other $2$-simplices that are disjoint from $B$ in $X_k$. Therefore, $V_f = 0$, i.e. the van Kampen obstruction of $X_k$ is zero.

\begin{remark}
For the above map on $\delta$, it is not possible to map in $B$ into $\mathbb{D}^4$ to get an embedding of $X_k$. This is because $f(\partial B) =(a*b)^{-3}*a^3*b^3$ is nontrivial $\pi_1(W)$, which we can see in the $3$rd nilpotent quotient  $\pi_1(W)/(\pi_1(W))_3 \cong \pi_1(C)/(\pi_1(C))_3 \cong$\footnote{In this case, $\pi_1(C) \cong F_2 = \langle a,b \rangle$ since $f(x_1x_2x_3 \cup \x_1\x_2\x_3)$ is the standard trivial link in $\mathbb{S}^3$.} $F_2/(F_2)_3$, in which $[(ab)^{-3}a^3b^3]$ is clearly nonzero. This might suggest that the obstruction to $X_k$ PL embedding in $\mathbb{S}^4$ is in terms of triple linking numbers. This turns out not to be the case because we can go back and change the map $f$ on $\alpha$ to be $((a*b)^{-1}*[b^{-1},a])$ instead of $(a*b)^{-1}$. Then $f(\partial B)$ is zero in $\pi_1(W)/(\pi_1(W))_3$, so we have to look at it in $\pi_1(W)/(\pi_1(W))_4$ instead. This leads us to Section \ref{embeddings}.
\end{remark}

%\begin{theorem}[Baumslag]
%Let $w= w(a_1,a_2,..., a_n)$ be an element of the free group $F_n$ on $n$ generators $a_1,a_2, ..., a_n$. Suppose that $w$ is not a proper power or a primitive\footnote{An element of a free group is a \emph{primitive} if it can be included in a set of free generators.}. If $g_1,g_2, ..., g_n$ and $g$ are elements of a free group connected by the relation
%\[ w(g_1,g_2,.., g_n) = g^k \quad (k>1),\]
%then the rank of the group generated by $g_1, g_2,..., g_n$ and $g$ is at most $(n-1)$. 
%\end{theorem}

\section{Various immersions of $X_k$ (that hide the embedding obstructions)}\label{embeddings}
%\subsection*{A few words about how to show that a complex does not embed} 
%There are two approaches to showing that a $n$-complex does not embed in $\mathbb{S}^{2n}$. One is to start with a hypothetical embedding and arrive at a contradiction. The other is to start with a particular generic immersion which has some intersections and then show that the intersections do not all go away as one varies the immersion by cooking up an immersion invariant out of them. We used the former approach to show that our examples $X_k$ do not PL embed in $\mathbb{S}^4$. Part of the novelty of these examples is that, in contrast to the examples in \cite{FKT}, %and \cite{SSS},
% they not seem to be amenable to the later approach. We will  illustrate this in this section by exhibiting immersions that hide embedding obstructions from potential invariants. 

We will give, for each odd $k >1$, a family of PL immersions $$f_n\colon X_k\rightarrow \mathbb{S}^4,$$ for $n\in \mathbb{Z}^+$, that are similar to those in Section \ref{obstruction of X_k} but the ``obstruction" to $X_k$ embedding in $\mathbb{S}^4$ is zero in all nilpotent quotients of $\pi_1(W) = \pi_1(\mathbb{D}^4-x_1x_2x_3 - \x_1\x_2\x_3)$ of orders $1, 2,..., (n+1)$. 

\subsection{The immersions $f_n\colon X_k\rightarrow \mathbb{S}^4$}  First, we PL embed $\bowtie$ into $\mathbb{S}^4$ as follows. We use an embedding like the one given in Figure 3(c) but with more ``twists" to embed $K_6$ into $\mathbb{S}^3$ and with even a lot more ``twists" to embed $\hat{K}_6$ in a $3$-ball disjoint from $f(K_6)$. The number of ``twists" added should be so that the linking number 
\[\Lk(x_1x_2x_3, x_4x_5x_6) = k \quad \text{and}\quad \Lk(\x_1\x_2\x_3,\x_4\x_5\x_6) = k^{2^{n-1}}.\]
Then we connect $x_6$ and $\x_6$ by a straight line segment outside these two balls and define $f$ on $\bowtie$ in the exact same way as in Section \ref{obstruction of X_k}. 

Next, we define $f_n$ on $\delta = \alpha \cup \beta \cup e$ as follows.
\begin{itemize}
\item[.]  $\beta$ wraps around the loop $(x_4x_5x_6 * \x_4\x_5\x_6)$, 
\item[.] the edge $e$ is mapped to a point, and 
\item[.] $\alpha$ wraps around  $w(a,b)$, where $a$ is a small loop that goes around $x_1x_2x_3$ in the same direction as $x_4x_5x_6$, $b$ is a small loop that goes around $\x_1\x_2\x_3$ in the same direction as $\x_4\x_5\x_6$, and $w(a,b)$ is a word in $a,b$ that is to be determined. 
\end{itemize} 
Then we extend $f_n$ to $B$ by filling in the image of its boundary $\partial B = \alpha^k*e*\beta*e^{-1}$ with a $2$-disk in $\mathbb{D}^4$. The map constructed is not generic, but we can perturb it to obtain a generic immersion. 

In $\pi_1(W)$, we have
\[f_n(\partial B) = f_n(\alpha^k*e*\beta*e^{-1}) =  f_n(\alpha)^k* f_n(\beta ) = f_n(\alpha)^k*(a^k*b^{k^{2^{n-1}}}).\]
Thus, in order for $f_n(\partial B)$ to be trivial in $\pi_1(W)/(\pi_1(W))_{n+1} \cong \pi_1(C)/(\pi_1(C))_{n+1} \cong F_2/(F_2)_{n+1}$, the element $(a^kb^{k^{2^{n-1}}})$ must be a $k$-th power in $F_2/(F_2)_{n+1}$. We need the following proposition, which we will prove afterwards. 

%\subsection{Products of $p$-th powers in nilpotent groups}
\begin{proposition}\label{pth powers}
Let $p$ be a positive integer and let $G$ be an $n$-step nilpotent group. For any $a,b\in G$, $a^pb^{p^{2^{n-1}}}$ is a $p$-th power.  
\end{proposition}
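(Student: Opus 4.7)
The plan is to prove Proposition \ref{pth powers} by induction on the nilpotency class $n$ of $G$. The base case $n=1$ is trivial: since $G$ is abelian, $(ab)^p=a^pb^p=a^pb^{p^{2^{0}}}$, so $c=ab$ works.

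For the inductive step, assume the result for $(n-1)$-step nilpotent groups and let $G$ be $n$-step nilpotent. Set $Z:=G_n$, the last (potentially nontrivial) term of the lower central series; since $[G,Z]=G_{n+1}=1$, the subgroup $Z$ is central and $G/Z$ is $(n-1)$-step nilpotent. Exploiting the identity $p^{2^{n-1}}=(p^{2^{n-2}})^2$, I would apply the inductive hypothesis in $G/Z$ to the pair $(\bar a,\bar b^{p^{2^{n-2}}})$, producing an element $c_0\in G$ with
$$a^p b^{p^{2^{n-1}}} = c_0^p\cdot z, \qquad z\in Z.$$
If there exists $w\in Z$ with $w^p=z$, then since $w$ is central, $c:=c_0 w$ satisfies $c^p=c_0^p w^p=c_0^p z=a^p b^{p^{2^{n-1}}}$, completing the induction.

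The hard step, and the main obstacle, is thus to show that the central discrepancy $z$ is a $p$-th power in the abelian group $Z$ (possibly after first modifying $c_0$ by a central element, which alters $z$ only by a $p$-th power and is therefore harmless). Because the statement of the proposition is universal in $G$, it suffices to verify this in the free $n$-step nilpotent group $N$ on $\{a,b\}$, where $Z=N_n$ is free abelian on basic commutators of weight $n$ in $a,b$. I would compute $z$ explicitly by unfolding the inductive construction of $c_0$ and applying the Hall--Petresco collection formula: each weight-$n$ basic commutator contributing to $z$ carries a coefficient built from binomials $\binom{p}{i}$ arising from the collection process together with factors of the form $(p^{2^{n-1}-1})^j$ pulled out of the $j\ge 1$ occurrences of a $b^{p^{2^{n-1}-1}}$-type entry inside the commutator. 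The calibration of the exponent $p^{2^{n-1}}$ is precisely what forces each such coefficient to be divisible by $p$: binomials $\binom{p}{i}$ with $i$ coprime to $p$ are automatically divisible by $p$, and in the remaining cases the factor $p^{2^{n-1}-1}\ge p$ supplies the needed divisibility. Hence $z\in pZ$ and the induction closes. The tightest constraint arises for $p=2$ with small $n$, which is why the exponent $2^{n-1}$ cannot be reduced in general.
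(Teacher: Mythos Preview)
Your inductive setup is sound: the base case is trivial, the application of the inductive hypothesis to the pair $(\bar a,\bar b^{\,p^{2^{n-2}}})$ in $G/G_n$ is correct, and the reduction ``it suffices to show $z\in pZ$ in the free $n$-step nilpotent group on $a,b$'' is legitimate. The gap is in the last paragraph, where you assert that the Hall--Petresco collection process will produce only coefficients divisible by $p$. This is the entire content of the proposition and you have not actually verified it. A few concrete problems: (i) the element $c_0$ is itself defined recursively (it is a word in $a$ and $b^{p^{2^{n-2}}}$ that in turn came from the $(n-1)$-step case, etc.), so ``unfolding'' it and collecting into basic commutators of weight $n$ is far from a single application of Hall--Petresco; (ii) your heuristic refers to ``factors $(p^{2^{n-1}-1})^j$ pulled out of $b^{p^{2^{n-1}-1}}$-type entries'', but no such exponent occurs in the construction---the powers that appear are $p^{2^{n-1}}, p^{2^{n-2}},\dots$; (iii) the argument ``$\binom{p}{i}$ is divisible by $p$ when $\gcd(i,p)=1$, and otherwise a spare power of $p$ compensates'' is not tied to any specific term, so it is not clear what it is supposed to prove. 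As written, the divisibility claim is an assertion, not a proof.

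The paper avoids this computational morass by a different and cleaner induction. Instead of inducting on $n$ and landing in $G_n$ all at once, it fixes $G$ and passes through the chain of quotients
\[
G/G_n^p G_{n-1}^{p^2}\cdots G_1^{p^{2^{n-1}}}\;\longrightarrow\;\cdots\;\longrightarrow\;G/G_n^p\;\longrightarrow\;G,
\]
proving a commutativity lemma (namely, that $G_{n-i-1}^{p^{2^{i+1}-1}}$ is central in $G/G_n^p\cdots G_{n-i}^{p^{2^i}}$) which shows that $p$-th power status lifts one step at a time. The point is that at each step the discrepancy lies in a subgroup that is central and consists entirely of $p$-th powers, so no basic-commutator bookkeeping is needed. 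If you want to salvage your approach, the honest route is to prove exactly this kind of centrality statement; the naked Hall--Petresco expansion of $c_0^{-p}a^p b^{p^{2^{n-1}}}$ will not organise itself for you.
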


Since $F_2/(F_2)_{n+1}$ is an $n$-step nilpotent group, Proposition \ref{pth powers} applies to give an element $w(a,b)$ such that $$a^kb^{k^{2^{n-1}}} = \;w(a,b)^{-k}$$ 
in $F_2/(F_2)_{n+1}$. We will use this $w(a,b)$ to define $f_n$ on $\alpha$. Then $f_n(\partial B)=0$  in $\pi_1(W)/(\pi_1(W))_{n+1}$. %That is, all the Milnor invariants of orders up till $n$ for the triple $(x_1x_2x_3, \x_1\x_2\x_3, \partial B)$ vanish. 

Lastly, note that any other triple of disjoint $3$-cycles in $(K_6\cup \hat{K}_6)$ (that bound a disk in $X_k$) must have one of the cycles lying in a ball disjoint from the other two, which (by inspection of Figure 3(c)) are isotopic to the unlink. Therefore, each one of them must be zero in any nilpotent quotient of the complement of the other two in $\mathbb{S}^3$.

\subsection{Proof of Proposition \ref{pth powers}}
Recall that for a group $G$, we write $G_1=G, G_{i+1}=[G,G_i]$. Let $G_i^r$ be the subgroup of $G_i$ generated by the $r$-th powers.

\begin{lemma}
 $G_{n-i-1}^{p^{2^{i+1}-1}}$ is in the center of $G/G_n^p\cdots G_{n-i}^{p^{2^i}}$. 
\end{lemma}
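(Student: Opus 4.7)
We argue by induction on $i$. For the base case $i=0$, since $G$ is $n$-step nilpotent, $G_{n+1}=[G,G_n]=1$, hence $G_n$ lies in the center of $G$. For $g\in G_{n-1}$ and $h\in G$, the commutator $c:=[g,h]\in G_n$ is central, so $[g^p,h]=c^p\in G_n^p$; thus $G_{n-1}^p$ is central in $G/G_n^p$.

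For the inductive step, write $H_j:=G_n^pG_{n-1}^{p^2}\cdots G_{n-j}^{p^{2^j}}$ and assume $G_{n-i}^{p^{2^i-1}}$ is central in $G/H_{i-1}$. Fix $g\in G_{n-i-1}$, $h\in G$, let $c:=[g,h]\in G_{n-i}$, and set $N:=p^{2^{i+1}-1}$. A direct induction on $N$ using the identity $[xy,h]=[x,h]^y[y,h]$ yields the expansion
\[ [g^N,h]\;=\;c^{g^{N-1}}\,c^{g^{N-2}}\cdots c^{g}\,c. \]
Writing each factor as $c^{g^j}=c\cdot[c^{-1},g^j]$, and noting that $[c^{-1},g^j]\in[G_{n-i},G_{n-i-1}]\subseteq G_{n-i+1}$, one can collect (the extra commutators picked up when moving $c$'s past $[c^{-1},g^j]$'s land in even deeper layers of the lower central series) to rewrite $[g^N,h]=c^N\cdot R$, where $R$ is a product of terms supported in $G_{n-i+\ell}$ for $\ell\ge 1$. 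The leading term $c^N=(c^{p^{2^i}})^{p^{2^i-1}}$ already lies in $G_{n-i}^{p^{2^i}}\subseteq H_i$ because $p^{2^i}$ divides $N$.

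What remains is to show that $R\in H_i$, which is the technical heart of the argument. At each depth $G_{n-i+\ell}$ with $1\le\ell\le i$, the relevant factor of $R$ picks up a $p$-adic coefficient of binomial type (for example, the first layer contributes a sum $\sum_{j=1}^{N-1}j=\binom{N}{2}$), whose $p$-adic valuation is at least $2^{i-\ell}$ by Kummer's theorem applied to $N=p^{2^{i+1}-1}$. Since $H_i$ contains $G_{n-i+\ell}^{p^{2^{i-\ell}}}$ for every $1\le\ell\le i$ (through the inductive hypothesis and the nesting $H_{i-1}\subseteq H_i$), each contribution to $R$ is absorbed into $H_i$. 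The main obstacle is executing this layer-by-layer bookkeeping cleanly --- one could equivalently invoke the Hall--Petresco collection formula directly --- since the doubling schedule $p,p^2,p^4,\ldots,p^{2^i}$ defining $H_i$ is matched exactly to the $p$-adic valuations forced by the choice $N=p^{2^{i+1}-1}$, so there is essentially no slack and every estimate must be carried out to the right order.
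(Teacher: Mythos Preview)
Your approach differs substantially from the paper's. The paper does not attempt to prove the lemma directly by induction on $i$; instead it \emph{strengthens} the statement to ``$G_{k-i}$ commutes with $G_{n-k}^{p^{2^{i+1}-1}}$ in $G/G_n^p\cdots G_{n-i}^{p^{2^i}}$'' for all $k$ (the lemma is the case $k=i+1$), precisely because the induction needs the other values of $k$ to close. With this stronger hypothesis in hand, the inductive step becomes a three-line computation based on the identity $[x,y^m]=([x,y]y)^m y^{-m}$: writing $*=p^{2^i-1}$, for $x\in G_{k-i}$ and $y\in G_{n-k}$ one has $[x,y^{p**}]=([x,y^*]y^*)^{p*}y^{-p**}$; the inductive hypothesis (same $k$, index $i-1$) says $[x,y^*]\in G_{k-(i-1)}$ commutes with $y^*\in G_{n-k}^*$ in the previous quotient, so this equals $[x,y^*]^{p*}\in G_{n-i}^{p^{2^i}}$, which dies in the next quotient. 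No collection, no binomial bookkeeping.

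Your direct Hall--Petresco-style collection on $[g^N,h]=\prod_j c^{g^j}$ can in principle be pushed through, and the situation is actually more favorable than you indicate: since $g\in G_{n-i-1}$ and $c=[g,h]\in G_{n-i}$, the correction $[g,c]$ lands in $G_{2n-2i-1}$ (a depth jump of $n-i-1$, not $1$), so for many $(n,i)$ the collection terminates after one step. But as written your sketch leaves the iterated collection and the exponent estimates at the level of an outline (``binomial type'', ``Kummer's theorem''), and --- more to the point --- it is not clear that your single inductive hypothesis suffices to justify moving the successive correction terms past one another at each stage of the collection. That is exactly the place where the paper's family of hypotheses indexed by $k$ does the work: it supplies, at every depth, the commutativity needed to collapse $([x,y^*]y^*)^{p*}$ to $[x,y^*]^{p*}$ in one stroke, which is the insight that converts your messy layer-by-layer argument into a clean induction.
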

\begin{proof}
The proof is by induction. To carry it out, we prove the more general statement 
\begin{itemize}
\item
$G_{k-i}$ and $G_{n-k}^{p^{2^{i+1}-1}}$ commute in $G/G_n^p\cdots G_{n-i}^{p^{2^i}}$.
\end{itemize}
The lemma is the special case when $k=i+1$, but the induction also uses larger $k$.  

For the proof it is useful to recall that the commutator of $x$ and $y^m$ can be rewritten as 
$$
[x,y^m]=xy^mx^{-1}y^{-m}=(xyx^{-1})^my^{-m}=([x,y]y)^my^{-m}.
$$ 
For the base case, note that for any $x\in G_k$ and $y\in G_{n-k}$ the commutator $[x,y]\in G_n$ is in the center of $G$, and therefore $[x,y^p]=([x,y]y)^py^{-p}=[x,y]^p$. So, $G_k$ commutes with $G_{n-k}^p$ in $G/G_n^p$. 

%In other words, $y^3$ commutes with $G_k$ in $G/G_n^3$. Since $G^3_{n-k}$ is generated by such cubes, we conclude that $G^3_{n-k}$ commutes with $G_k$ in $G/G_n^3$. 

Now, suppose we have proved the lemma for $i-1$ and want to prove it for $i$. To reduce notation, set $*=p^{2^i-1}$, so the $i-1$ statement says that $G_{k-(i-1)}$ commutes with $G^*_{n-k}$ in $G/G_n^p\cdots G_{n-(i-1)}^{p^{2^{i-1}}}$. Now, we pick $x\in G_{k-i}$, $y\in G_{n-k}$ and compute 
\begin{eqnarray*}
[x,y^{p**}]&=&([x,y^*]y^*)^{p*}y^{-p**}\\
&=&[x,y^*]^{p*} \hspace{2cm}\mbox{ in } G/G_n^p\cdots G_{n-(i-1)}^{p^{2^{i-1}}}\\
&=&1 \hspace{3.1cm}\mbox{ in } G/G_n^p\cdots G_{n-(i-1)}^{p^{2^{i-1}}}G_{n-i}^{p^{2^i}}.
\end{eqnarray*}
The first line follows from the commutator formula, the second line from the inductive hypothesis since $[x,y^{*}]\in G_{k-(i-1)}$ and $y^*\in G^*_{n-k}$, and the third line from $[x,y^{*}]\in G_{n-i}$ since $p*=p^{2^i}$. 

In summary, we have shown that $G_{k-i}$ commutes with $G_{n-k}^{p**}=G_{n-k}^{p^{2^{i+1}-1}}$ in the relevant quotient, which finishes the inductive step and the proof. 
\end{proof}
The lemma implies the image of $G^{p^{2^{i+1}}}_{n-i-1}$ in $G/G_n^p\cdots G_{n-i}^{p^{2^i}}$ consists of $p$-th powers of central elements. 
\begin{corollary}
If $x$ is a $p$-th power in $G/G_n^p\cdots G_{n-i}^{p^{2^i}}G_{n-(i+1)}^{p^{2^{i+1}}}$, then it is a $p$-th power in $G/G_n^p\cdots G_{n-i}^{p^{2^i}}$. 
\end{corollary}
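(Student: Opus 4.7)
The plan is to derive this corollary from the preceding lemma via a short lifting argument. Write $Q = G/G_n^p\cdots G_{n-i}^{p^{2^i}}$ for the larger quotient and let $N \leq Q$ denote the image of $G_{n-(i+1)}^{p^{2^{i+1}}}$, so the smaller quotient in the statement is $Q/N$. Suppose $x \in Q$ maps to a $p$-th power in $Q/N$. Lifting, we may write
\[
x = y^p z \quad \text{in } Q, \qquad z \in N,
\]
for some $y \in Q$. The goal is to absorb $z$ into the $p$-th power.

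The crucial observation is that every element of $N$ is a $p$-th power of a central element of $Q$. A typical generator of $G_{n-(i+1)}^{p^{2^{i+1}}}$ has the form $g^{p^{2^{i+1}}}$ with $g \in G_{n-i-1}$, and one factors
\[
g^{p^{2^{i+1}}} \;=\; \bigl(g^{p^{2^{i+1}-1}}\bigr)^{\!p},
\]
where $g^{p^{2^{i+1}-1}} \in G_{n-i-1}^{p^{2^{i+1}-1}}$. By the preceding lemma, this last subgroup lies in the center of $Q$. Since any two central elements commute, the identity $(ab)^p = a^p b^p$ (valid when $a,b$ commute) together with $(a^p)^{-1} = (a^{-1})^p$ shows that the set of $p$-th powers of central elements is closed under products and inverses. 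Thus the whole subgroup $N$, generated by such elements, consists entirely of $p$-th powers of central elements of $Q$, and in particular $z = c^p$ for some central $c \in Q$.

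Finally, since $c$ commutes with $y$,
\[
x \;=\; y^p c^p \;=\; (yc)^p \quad \text{in } Q,
\]
which exhibits $x$ as a $p$-th power in $Q$ and completes the proof. The only subtle point is the passage from ``each generator of $N$ is a $p$-th power of a central element'' to ``every element of $N$ is a $p$-th power of a central element''; this step genuinely relies on centrality, since $p$-th powers in a non-abelian group are not in general closed under products, and this is precisely where the lemma is invoked.
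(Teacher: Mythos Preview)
Your argument is correct and follows the same approach as the paper: lift to $x=y^pz$ with $z$ in the image of $G_{n-(i+1)}^{p^{2^{i+1}}}$, use the lemma to write $z=c^p$ with $c$ central, and conclude $x=(yc)^p$. You are in fact more careful than the paper at one point---the paper simply asserts that the image of $G_{n-(i+1)}^{p^{2^{i+1}}}$ ``consists of $p$-th powers of central elements,'' whereas you justify the passage from generators to arbitrary elements by observing that $p$-th powers of central elements form a subgroup.
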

\begin{proof}
We compute in the quotient $G/G_n^p\cdots G_{n-i}^{p^{2^i}}$. 
By assumption, $x=u^pw$ for $w\in G^{p^{2^{i+1}}}_{n-i-1}$. By the previous lemma, this $w$ is the $p$-th power of a central element $v$, i.e. $w=v^p$. Therefore $x=u^pv^p=(uv)^p$. 
\end{proof}
Clearly $a^pb^{p^{2^{n-1}}}$ is a $p$-th power in $G/G_n^p\cdots G_1^{p^{2^{n-1}}}$. Repeatedly applying the previous corollary shows that it is a $p$-th power in $G$. This finishes the proof of the proposition.

\section{Embedding obstructions for octahedralizations}\label{octahedral}
\subsection{Octahedralizations and some contexts}
Let $L$ be a $d$-dimensional simplicial complex. The {\it octahedralization} $OL$ is a complex obtained from $L$ as follows. For each vertex $v$ of $L$ there are two vertices $v$ and $\underline{v}$ in $OL$. For each $k$-simplex $v_0*\dots*v_k$ in $L$, there are $2^{k+1}$ $k$-simplices in $OL$ of the form $v_0^**\dots*v_k^*$, where $v_i^*\in\{v_i,\underline{v_i}\}$. Note that $OL$ depends on the triangulation of $L$ and not just on its topology.

In \cite{ADOS} it was computed that, for a flag triangulation of $L$, the van Kampen obstruction to $PL$-embedding $OL$ in $\mathbb S^{2d}$ vanishes if and only if $H_d(L;\mathbb Z/2)=0$. For $d\geq 3$, it is the only obstruction to finding such an embedding, so in that case the octahedralization of any $\mathbb Z/2$-acyclic, flag triangulated $d$-complex PL embeds in $\mathbb S^{2d}$. In \cite{AOS} this observation was combined with the Davis reflection group trick to construct, for $d\geq 3$, closed aspherical $(2d+1)$-manifolds whose $\mathbb F_p$-homology grows linearly in a residual sequence of regular covers for a fixed odd prime $p$. This should be contrasted with a conjecture of Singer which says for odd dimensional closed aspherical manifolds that the $\mathbb Q$-homology cannot grow linearly in such a sequence of covers. 

To obtain such manifolds in dimension $5$, one would need to $PL$ embed $OL$ in $\mathbb S^4$ for some flag %footnote saying what this means? 
triangulation of a $2$-complex $L$ with $H_2(L;\mathbb Z/2)=0$ but $H_2(L;\mathbb Z/p)\not=0$. The simplest such $L$ are the pseudo-projective planes $P_p$. This leads to the following question raised in the remark at the end of \cite{AOS}.

\begin{question}
Fix an odd prime $p$ and a flag triangulation of the pseudo-projective plane $P_p$. Does its octahedralization $OP_p$ embed PL in $\mathbb S^4$? 
\end{question} 

In this section, we will show that for a sufficiently fine triangulation of $P_k$, the octahedralization $OP_k$ does not $PL$-embed in $\mathbb S^4$. More precisely, we prove %The idea of the proof is to find a complex similar to $X_k$ inside such an octahedralization. 

\begin{proposition}\label{octanotembed}
Pick a flag triangulation of $P_k$ that contains two vertices $v,\hat v$ whose closed $2$-neighborhoods $N_2(v),N_2(\hat v)$ are disjoint from each other and from the singular set $\alpha$ of $P_k$. For this triangulation, $OP_k$ does not $PL$-embed in $\mathbb S^4$. 
\end{proposition}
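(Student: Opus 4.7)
My plan is to follow the argument of Theorem~\ref{main theorem} on $OP_k$ in place of $X_k$. Suppose for contradiction that $f\colon OP_k\hookrightarrow\mathbb{S}^4$ is a PL embedding. First I would exhibit two disjoint triangulated $2$-spheres in $OP_k$ that play the role of the triangles $x_1x_2x_3, \x_1\x_2\x_3$ in the $X_k$ argument: for each $u\in\{v,\hat v\}$, the link $\lambda_u:=\Lk_{P_k}(u)$ is a simple cycle (since $u$ is a manifold point of a flag triangulated $P_k$), coned from both $u$ and $\underline u$ inside $OP_k$, and
\[
\Sigma_u \;:=\; u*\lambda_u\;\cup\;\underline u*\lambda_u
\]
is a triangulated $2$-sphere in $OP_k$. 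The disjointness of $N_2(v), N_2(\hat v)$ guarantees $\Sigma_v\cap\Sigma_{\hat v}=\emptyset$. As in the ``useful fact'' of Section~\ref{simplify}, Alexander duality and Stallings' theorem identify the nilpotent quotients of $\pi_1(\mathbb{S}^4\setminus(\Sigma_v\cup\Sigma_{\hat v}))$ with those of $F_2=\langle a,b\rangle$, where $a,b$ are meridians of $\Sigma_v,\Sigma_{\hat v}$.

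The core of the argument is to produce a loop $\gamma\subset OP_k$ satisfying (i)~a $2$-complex $Z\subset OP_k\setminus(\Sigma_v\cup\Sigma_{\hat v})$ realizing $\gamma$ as a $k$-th power in $\pi_1(Z)\cong\mathbb Z$, and (ii)~$\gamma$ having image $a^rb^s$ with $r,s\neq 0\pmod 2$ in $\pi_1(\mathbb{S}^4\setminus(\Sigma_v\cup\Sigma_{\hat v}))$. For~(i), one takes the $2$-cell of $P_k\subset OP_k$, punctures it at an interior point $D_0$ away from $v,\hat v$, and modifies it near $v,\hat v$ by rerouting through the antipodal cones $\underline u*\lambda_u$ together with the surrounding mixed triangles of the octahedralized stars, so that the resulting $2$-complex $Z$ avoids $\Sigma_v\cup\Sigma_{\hat v}$; the small loop $\gamma=\partial D_0$ is then $\alpha^k$ in $\pi_1(Z)$. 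For~(ii), a Conway--Gordon-style argument exploiting the octahedralized link $O\lambda_u$ sitting inside $\Lk_{\mathbb{S}^4}(f(u))=\mathbb{S}^3$ should produce the needed odd linking numbers $\Lk(\gamma,\Sigma_v), \Lk(\gamma,\Sigma_{\hat v})$ when the triangulation is fine enough. Given both properties, Baumslag's theorem yields a finite nilpotent quotient of $F_2$ in which $a^rb^s$ is not a $k$-th power, contradicting~(i) exactly as in Section~\ref{noembedd}.

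The main obstacle is balancing properties~(i) and~(ii): the two conditions pull in opposite directions, as keeping $Z$ disjoint from $\Sigma_v\cup\Sigma_{\hat v}$ tends to push $\gamma$ away from the $\Sigma_u$'s and trivialize its meridian content. Unlike in $X_k$, where Conway--Gordon applied to $K_6\hookrightarrow\mathbb{S}^3$ directly forces $\Lk(x_4x_5x_6,x_1x_2x_3)$ and $\Lk(\x_4\x_5\x_6,\x_1\x_2\x_3)$ to be odd, here one must analyze the less standard embedded graph $O\lambda_u\subset\mathbb{S}^3$ and extract the parity through a finer argument. This is precisely where the ``fine enough'' hypothesis on the flag triangulation enters: it guarantees that $\lambda_u$ is long enough for the octahedralized link $O\lambda_u$ to support a Conway--Gordon-type obstruction, and it provides sufficient combinatorial room in $OP_k$ near $v$ and $\hat v$ to realize the rerouting in~(i).
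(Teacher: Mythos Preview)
Your outline has the right overall shape, but it contains two genuine gaps that the paper's argument handles differently.

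First, the rerouting in~(i) cannot work as stated: your sphere $\Sigma_u=u*\lambda_u\cup\underline u*\lambda_u$ \emph{contains} the antipodal cone $\underline u*\lambda_u$ and the equator $\lambda_u$, so any complex that uses either of these meets $\Sigma_u$. In fact, since $\Sigma_u$ contains both $u$ and $\underline u$, there is no cone point available to cap off the boundary $\lambda_u$ of the punctured star while staying in $OP_k\setminus\Sigma_u$. The paper avoids this by not fixing the removed pieces in advance: it removes the \emph{disks} $\underline v*\tau$ and $\underline{\hat v}*\hat\tau$, where $\tau,\hat\tau$ are chosen (after the embedding) via intrinsic linkedness of $O(\Lk(v))$, and then Lemma~\ref{onto} forces $\tau$ and $\hat\tau$ to project bijectively onto $\Lk(v),\Lk(\hat v)$. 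Only after that swap does one know $\tau=\underline{\lambda_v}$, and then $P_k-N_1(v\cup\hat v)$ is automatically disjoint from the removed disks, giving the $k$-th power for free.

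Second, and more seriously, even granting~(i) your claim~(ii) asserts that $\gamma$ maps to the \emph{specific} word $a^rb^s$ in the nilpotent quotients of $F_2$. A Conway--Gordon-type parity argument on $O\lambda_u\subset\mathbb S^3$ could at best control the abelianization $(r,s)$, i.e.\ the linking numbers; it says nothing about higher nilpotent data. The paper obtains the precise form $a^rb^s$ from the free-product splitting $\pi_1(C)\cong Z*\hat Z$, which in turn comes from separating $O(\Lk(v))$ and $O(\Lk(\hat v))$ into disjoint $3$-balls by looking at the link of the connecting path in $\mathbb S^4$ (the analogue of the $x_6\hat x_6$ edge trick in Section~\ref{simplify}). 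Your approach, working directly in $\mathbb S^4\setminus(\Sigma_v\cup\Sigma_{\hat v})$, has no such splitting; this is exactly the failure mode warned about in the first Remark after Section~\ref{noembedd}, where a product of conjugates of $a$ and of $b$ with the right exponent sums can nonetheless be a $k$-th power. So Baumslag's theorem cannot be invoked from linking data alone.

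In short: the missing ingredient is the passage to the link of a path from $v$ to $\hat v$ (giving the $\mathbb D^3\cup(\mathbb S^2\times[0,1])\cup\hat{\mathbb D}^3$ decomposition of $\mathbb S^3$), together with the embedding-dependent selection of linked cycles in $O(\Lk(v))$ and the bijectivity Lemma~\ref{onto}. The ``fine enough'' hypothesis is used not to make $O\lambda_u$ support a Conway--Gordon obstruction (a flag $4$-gon already gives $K_{4,4}$, handled by Sachs), but to supply disjoint annuli $N_2(u)-N_1(u)$ in which non-surjective cycles can be filled disjointly from $\underline u*\tau$, which is what drives Lemma~\ref{onto}.
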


\subsection{Proof of Proposition \ref{octanotembed}}
Suppose it does. Pick a path $\rho$ in the regular part of $P_k$ connecting $v$ with $\hat v$ as in Figure 10.
\begin{figure}[h!]
\label{neighborhood}
\centering
\includegraphics[scale=0.35]{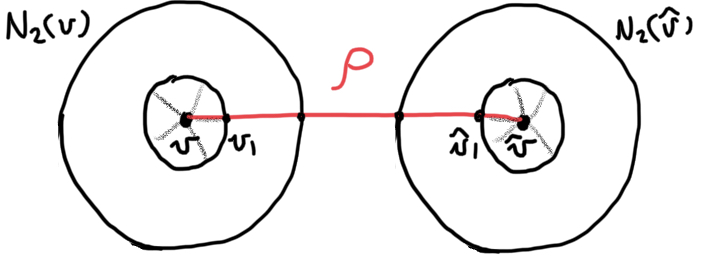}
\caption{Two $2$-neighborhoods connected by a path in the regular region of $P_k$.}
\end{figure}

% The role of $\bowtie$ will be played by $ON_2(v)\cup \rho\cup ON_2(\hat v)$. 
Arguing as in Section \ref{simplify}, any $PL$ embedding of $OP_k$ into $\mathbb S^4$ induces a $PL$ embedding $f$ into $\mathbb D^4$ of the subcomplex of $OP_k$ consisting of those simplices not containing $v$ or $\hat v$. Moreover, the intersection of $f$ with $\partial\mathbb D^4=\mathbb S^3$ consists of $O(Lk(v))$ embedded in $\mathbb D^3$, $O(Lk(\hat v))$ embedded in $\hat{\mathbb D}^3$ and a standard edge $v_1\hat v_1\subset\rho$ connecting them.

\begin{lemma}
The graph $O(Lk(v))$ is not linklessly embeddable.
\end{lemma}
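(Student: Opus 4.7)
The plan is to identify $O(\Lk(v))$ abstractly as the octahedralization of a cycle $C_n$ of length $n\geq 4$, and then to exhibit inside $O(C_n)$ a minor belonging to the Petersen family of intrinsically linked graphs.

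Since $N_2(v)$ is disjoint from $\alpha$, the vertex $v$ lies in the manifold part of $P_k$, so $\Lk(v)$ is a triangulated circle, i.e.\ a cycle $C_n$. The flag hypothesis forces $n\geq 4$: otherwise the three vertices of the link, together with $v$, would be four pairwise adjacent vertices of $P_k$ and, by flagness, would span a $3$-simplex, contradicting $\dim P_k=2$.

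Write the vertices of $O(C_n)$ as $(i,\varepsilon)$ with $i\in\mathbb Z/n$ and $\varepsilon\in\{0,1\}$, so that $(i,\varepsilon)\sim(j,\delta)$ iff $|i-j|\equiv 1\pmod n$. For $n=4$ the ``even'' and ``odd'' positions split the vertex set into two independent $4$-sets joined by all $16$ cross-edges, so $O(C_4)\cong K_{4,4}$. For $n\geq 5$ I would partition $\mathbb Z/n$ cyclically into four consecutive non-empty arcs $A_1,\dots,A_4$; within each arc $A_k$ the subscript-$0$ vertices form a path in $O(C_n)$ (consecutive positions are adjacent), as do the subscript-$1$ vertices, so contracting these two paths to single vertices $u_k$ and $v_k$ respectively gives a valid minor. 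In the resulting $8$-vertex graph every pair of consecutive super-positions $\{u_k,v_k\}$ and $\{u_{k+1},v_{k+1}\}$ is joined by all four cross-edges, so the minor contains $O(C_4)=K_{4,4}$ as a subgraph (with possibly extra edges $u_kv_k$ inside arcs of length $>1$, which one may simply discard).

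Finally, $K_{4,4}$ contains $K_{3,3,1}$ as a minor: contracting a single cross-edge $a_4b_4$ merges these two vertices into an apex adjacent to the remaining six vertices, which still induce $K_{3,3}$ on the bipartition $\{a_1,a_2,a_3\}\sqcup\{b_1,b_2,b_3\}$. Since $K_{3,3,1}$ is one of the seven minor-minimal intrinsically linked graphs in the Petersen family of Sachs and Robertson--Seymour--Thomas, it is intrinsically linked, and hence so is $O(\Lk(v))$. The most delicate step in the plan is the arc-contraction in paragraph three, where one needs to check that collapsing two parallel paths to $u_k$ and $v_k$ does not cause incidental identifications that would destroy the $K_{4,4}$-subgraph pattern between consecutive super-positions; this turns out to be a straightforward piece of bookkeeping once the valid paths are fixed.
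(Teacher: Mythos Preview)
Your proof is correct and follows essentially the same route as the paper's: both identify $\Lk(v)$ as a cycle $C_n$ with $n\geq 4$, observe that $O(C_4)\cong K_{4,4}$, and reduce the case $n>4$ by exhibiting a $K_{4,4}$ minor in $O(C_n)$. The differences are cosmetic: the paper cites Sachs directly for the intrinsic linkedness of $K_{4,4}$, whereas you take the extra (harmless) step of contracting one edge to reach the Petersen-family graph $K_{3,3,1}$; and for $n>4$ the paper extracts a fixed subgraph and collapses two edges, while your arc-partition contraction achieves the same $K_{4,4}$ minor in a slightly more uniform way. Your explicit justification of $n\geq 4$ from the flag hypothesis is a nice addition that the paper leaves implicit.
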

\begin{proof} 
Since the triangulation is a flag triangulation, $Lk(v)$ is an $n$-gon for some $n\geq 4$. When $n=4$ the octahedralization $OLk(v)$ is the graph $K_{4,4}$, which is not linklessly embeddable by a theorem of Sachs (\cite{sachs}). 
\begin{figure}[h!]
\label{octa}
\centering
\includegraphics[scale=0.3]{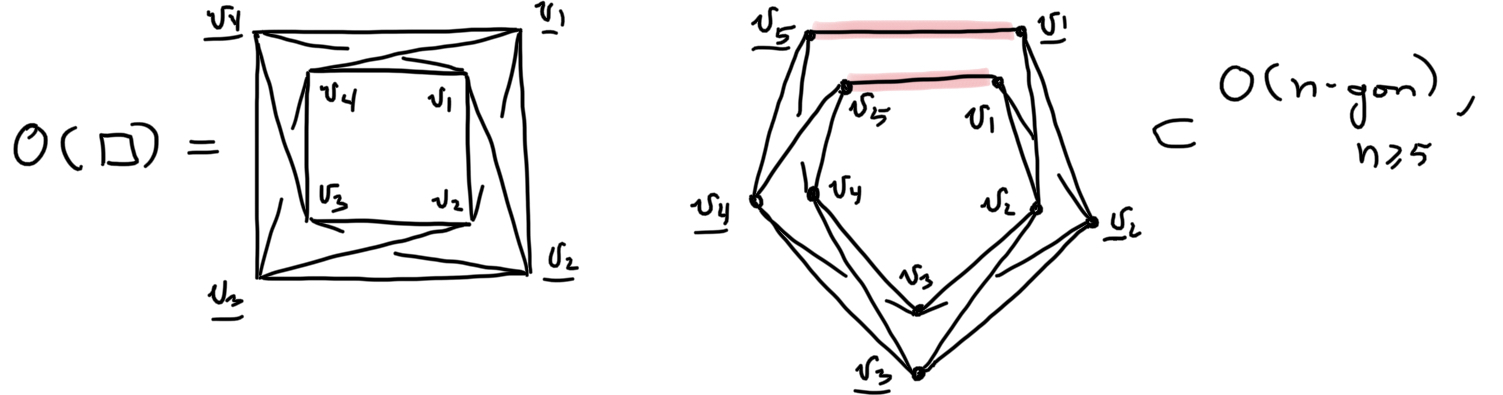}
\caption{Octahedralizations of links.}
\end{figure}
When $n>4$ the octahedralization is larger, but it contains the subgraph on the right of Figure 11. This subgraph is also not linklessly embeddable since we can collapse it to $K_{4,4}$ by collapsing two edges.
\end{proof}

 %It comes out of the proof that the reason these graphs are not linklessly embeddable is that there are two disjoint loops $\sigma$ and $\tau$ in $O(Lk(v))$ such that $f(\sigma)$ and $f(\tau)$ have linking number $1$ mod $2$.
So, we can pick disjoint loops $\sigma,\tau,\in O(Lk(v))$, $\hat\sigma,\hat\tau\in O(Lk(\hat v))$ so that $f(\sigma)$ links $f(\tau)$ and $f(\hat\sigma)$ links $f(\hat\tau)$. Moreover, without loss of generality, $v_1\in\sigma$ and $\hat v_1\in\hat\sigma$. 

Now, let $C:=\mathbb S^3-f(\tau\cup\hat\tau)$, $W:=\mathbb D^4-f(\underline {v}*\tau\cup\underline{\hat v}*\hat\tau)$, and $\beta:=\sigma*(v_1\hat v_1)*\hat\sigma*(\hat v_1v_1)$. We will show next that $f(\beta)$ is a $k$-th power in $\pi_1(W)$. Given this, proceeding as in Section \ref{proofoftheorem} proves the proposition. We need the following lemma.
\begin{lemma}
\label{onto}
Suppose $\sigma$ and $\tau$ are disjoint loops in $O(Lk(v))$ such that $f(\sigma)$ and $f(\tau)$ link in $\mathbb S^3$. Then each of $\sigma$ and $\tau$ projects bijectively onto the loop $\partial(N_1(v))$.
\end{lemma}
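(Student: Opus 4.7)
The plan is to argue by contradiction. Suppose $\sigma$ does not project bijectively onto $\Lk(v)=C_n$. Failure of bijectivity means that either (a) some position $i$ has both $w_i$ and $\underline{w_i}$ in $V(\sigma)$, or (b) some position $j$ has neither. In case (a), disjointness forces $\tau\subset O(\Lk(v)\setminus\{w_i\})$; in case (b), $\sigma\subset O(\Lk(v)\setminus\{w_j\})$. Either way, one of $\sigma,\tau$ --- call it $\gamma$ and the other $\delta$ --- lies in the octahedralization of an arc of $\Lk(v)$, and in particular has winding number $0$ under the natural projection $O(\Lk(v))\to\Lk(v)$.

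To derive a contradiction I will show $\Lk_{\mathbb S^3}(f(\sigma),f(\tau))=0$. I use the standard identification of linking in $\mathbb S^3=\partial\mathbb D^4$ with the intersection pairing in $\mathbb D^4$: for any $2$-chains $F_\sigma,F_\tau\subset\mathbb D^4$ with $\partial F_\sigma=f(\sigma)$ and $\partial F_\tau=f(\tau)$, the linking number equals $F_\sigma\cdot F_\tau$. For $\delta$ I take the cone $F_\delta:=f(\underline v*\delta)$, which lies in $\mathbb D^4$ since no simplex of $\underline v*\delta$ contains $v$ or $\hat v$. The crucial move is to build $F_\gamma$ \emph{not} as the cone $\underline v*\gamma$ (whose intersection with $F_\delta$ sits at the common apex $\underline v$ and merely reproduces the linking number we wish to compute), but as a $2$-chain supported in the octahedralization of the outer annulus $A:=N_2(v)\setminus\mathrm{int}\,\St(v)\subset P_k$. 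Here $A$ is a genuine annulus because $N_2(v)$ is disjoint from the singular set $\alpha$.

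Granting such an $F_\gamma$, the proof finishes quickly: $F_\gamma$ contains no simplex incident to $\underline v$, and if it is arranged so that the only $\Lk(v)$-vertices in $V(F_\gamma)$ belong to $V(\gamma)$, then $V(F_\gamma)\cap V(\underline v*\delta)\subset V(\gamma)\cap V(\delta)=\emptyset$. Hence the subcomplexes $F_\gamma$ and $\underline v*\delta$ of $OP_k$ are disjoint; since $f$ is a PL embedding, so are their images, giving $F_\gamma\cdot F_\delta=0$ and thus $\Lk_{\mathbb S^3}(f(\sigma),f(\tau))=0$, a contradiction.

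The step I expect to be the main obstacle is the existence of such an $F_\gamma$. Over $\mathbb Z/2$ the building blocks are clear: for each edge $e=w_iw_{i+1}$ of $\Lk(v)$, letting $u_e$ be the vertex across $e$ from $v$ in the triangulation of $N_2(v)$, the four octahedralized outer triangles $u_ew_i^*w_{i+1}^*$ sum to a $2$-chain whose mod-$2$ boundary is the rectangular $4$-cycle $c_e=w_iw_{i+1}\underline{w_i}\,\underline{w_{i+1}}$ in $O(e)$, and using $\underline{u_e}$ gives an analogous bound. Since $\gamma$ has winding number zero mod $2$, the number of edges of $\gamma$ lying in the $K_{2,2}$ over each edge $e$ of $\Lk(v)$ is even. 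The plan is to decompose $\gamma$ edge by edge and bound each local piece with such outer triangles, absorbing any leftover boundary along the remaining triangulation of $A$. The underlying topological reason this should work is that $H_1(A;\mathbb Z/2)\cong\mathbb Z/2$ is detected exactly by the winding number on $\Lk(v)$, and the analogous claim for $H_1(O(A);\mathbb Z/2)$ forces winding-zero cycles in $O(\Lk(v))$ to bound in $O(A)$; making this precise while keeping the support of $F_\gamma$ clear of the vertices of $\delta$ is where the argument will require the most care.
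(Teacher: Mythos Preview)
Your overall strategy matches the paper's: reduce to the situation where one of the two loops (call it $\gamma$) fails to surject onto $\Lk(v)$, fill $\gamma$ by a $2$-chain in the octahedralized outer annulus $O(N_2(v)\setminus N_1(v))$, fill the other loop $\delta$ by the cone $\underline v*\delta$, and observe that these two fillings are disjoint subcomplexes of $OP_k$, hence map under the embedding $f$ to disjoint surfaces in $\mathbb D^4$, forcing the linking number to vanish. Your case (a)/(b) dichotomy is just a repackaging of the paper's organization: the paper first shows both loops must surject (else one bounds in $O(A)$ and we are done), and then notes that two disjoint surjective loops in a graph on $2n$ vertices sitting over an $n$-gon are forced to be bijective by counting.

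Where you diverge is in constructing the filling $F_\gamma$. The paper simply exhibits an \emph{embedded disk} (its Figure 12): since $\gamma$ lies in $O(\text{arc})$ for an arc $\Lk(v)\setminus\{w_j\}$, one pushes each edge $w_i^*w_{i+1}^*$ of $\gamma$ outward via the triangle $w_i^*w_{i+1}^*u_e$ in $O(A)$, and then caps off the leftover boundary at each vertex $w_i^*\in V(\gamma)$ using the fan $w_i^**(\text{outer arc of }\Lk_{P_k}(w_i))$. All of these triangles lie in $O(A)$ and involve no $\Lk(v)$-vertices beyond those already in $V(\gamma)$, so disjointness from $\underline v*\delta$ is immediate.

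Your proposed route through $\bmod\;2$ homology is more indirect, and the step you correctly flag as delicate---arranging that the support of $F_\gamma$ avoids $V(\delta)$---does not follow from a computation of $H_1(O(A);\mathbb Z/2)$ alone. Even granting that winding-zero cycles bound in $O(A)$, you would still need to produce a bounding chain supported away from $V(\delta)$; this is precisely what the explicit geometric construction delivers and what your homological heuristic does not. So the gap you identified is real for the method you chose, but it evaporates once you build the disk directly as above rather than via chain-level bookkeeping.
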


\begin{figure}[h!]
\label{fill}
\centering
\includegraphics[scale=0.65]{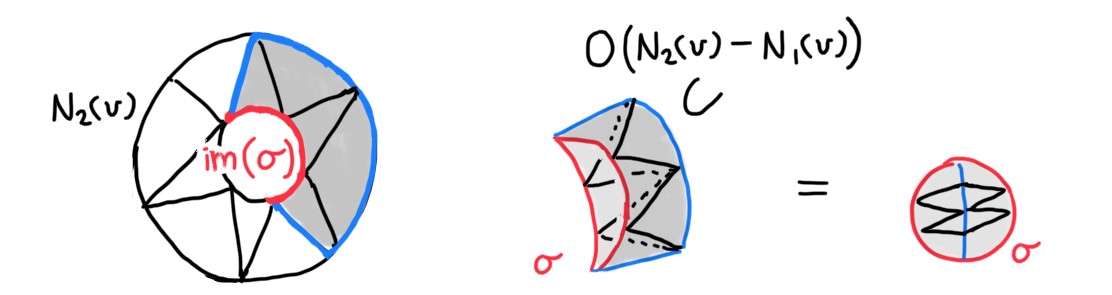}
\caption{Filling $\sigma$ in the octahedralization of the annulus $N_2(v)-N_1(v)$.}
\end{figure}

\begin{proof}
If one of the loops, say $\sigma$, does not map onto $Lk(v)$, then it bounds an embedded disk in $O(N_2(v)-N_1(v))$ (see Figure 12). Since we can fill the other loop $\tau$ with the disk $\underline v*\tau$, the images of $f(\sigma)$ and $f(\tau)$ bound disjoint embedded disks in $\mathbb D^4$, so $f(\sigma)$ and $f(\tau)$ do not link in $\mathbb S^3$, contradicting the assumption. So, both $\sigma$ and $\tau$ map onto. Since they are disjoint and $O(Lk(v))$ has twice as many vertices as $Lk(v)$, this is only possible if $\sigma$ and $\tau$ map bijectively onto $Lk(v)$. 
\end{proof}
The lemma implies that, after interchanging some vertices $w$ with $\underline w$ where necessary, we can assume that $\sigma=\partial N_1(v), \tau=\underline{\partial N_1(v)}$ and similarly $\hat\sigma=\partial N_1(\hat v),\hat\tau=\underline{\partial N_1(\hat v)}$. Since $\beta=\partial N_1(v)*(v_1\hat v_1)*\partial N_1(\hat v)*(\hat v_1v_1)$ is homotopic to $k$ times $\alpha$ in $P_k-N_1(v\cup\hat v)$ and $f(P_k-N_1(v\cup\hat v))$ is contained in $W$, we conclude that $f(\beta)$ is a $k$-th power in $\pi_1(W)$. So, we have proved Proposition \ref{octanotembed}.
\newline

%\begin{proof}
%If one of the loops, say $\sigma$, does not map onto $Lk(v)$, then it bounds in $O(N_2(v)-N_1(v))$ (see Figure 12). Since we can fill $\tau$ with $\underline v*\tau$, this shows that $f(\sigma)$ and $f(\tau)$ do not link, contadicting the assumption. So, both $\sigma$ and $\tau$ map onto. Since they are disjoint and $O(Lk(v))$ has twice as many vertices as $Lk(v)$, this is only possible if $\sigma$ and $\tau$ map bijectively onto $Lk(v)$. 
%\end{proof}
%The lemma implies that, after interchanging some vertices $w$ with $\underline w$ where necessary, we can assume that $\sigma=\partial N_1(v), \tau=\underline{\partial N_1(v)}$ and similarly $\hat\sigma=\partial N_1(\hat v),\hat\tau=\underline{\partial N_1(\hat v)}$. Since $\partial N_1(v)*(v_1\hat v_1)*\partial N_1(\hat v)*(\hat v_1v_1)$ is homotopic to $k$ times $\alpha$ in $P_k-N_1(v\cup\hat v)$, it follows that $\beta$ is homotopic to $k$ times $\alpha$ in $O(P_k-N_1(v\cup\hat v))$ and therefore $f(\beta)$ is a $k$-th power in $\pi_1(W)$. So, we have proved Proposition \ref{octanotembed}.
%\newline

Lastly, for the readers who have read the remarks at the end of Section \ref{proofoftheorem}, we will leave you with the following question.

\begin{question}
Does any of the octahedralizations $O(P_k \# \Sigma_g)$ embed PL in $\mathbb{S}^4$? 
\end{question}

\section{Embedding obstructions formulated by Krushkal (\cite{Krushkal}) vanish on $X_k$ }\label{Krushkalobstruction}
In \cite{Krushkal}, Krushkal used $4$-dimensional thickenings of $2$-complexes to define obstructions to embedding $2$-complexes in $\mathbb R^4$ that detect the non-embedding of the Freedmann-Krushkal-Teichner (abbreviated FKT below) examples. In this section we will briefly summarize Krushkal's obstructions and show that they vanish for our examples $X_k$. 
\subsection*{Thickenings}
The $1$-skeleton of a $2$-complex $K$ can be thickened in a unique way to a $4$-manifold with boundary $(V,\partial V)$. This $4$-manifold has a $0$-handle for each vertex and a $1$-handle for each edge. It deformation retracts onto an embedded copy $K^{(1)}\hookrightarrow V$ of the $1$-skeleton of the $2$-complex. To obtain a {\it thickening} of the $2$-complex $K$ from this, one attaches a $2$-handle for each $2$-cell along an attaching loop in $\partial V$ that is homotopic in $V$ to the attaching map of the $2$-cell in $K^{(1)}$. The resulting $4$-manifold $M$ is homotopy equivalent to $K$ via a collapse map $M\ra K$. There can be different valid choices for the attaching loops in $\partial V$ and as a result a $2$-complex $K$ can have many different $4$-dimensional thickenings.

\begin{example}
Both the pair of pants $A= (S^2 -\{D^2\cup D^2\cup D^2\})$ and the torus with a disk removed $B= (T^2 - D^2)$ are $2$-dimensional thickenings of the figure $8$ graph, so $A\times A$ and $B\times B$ are $4$-dimensional thickenings of the $2$-complex $8\times 8$. The thickenings $A\times A$ and $B\times B$ can be distinguished by the intersection form on $H_2(\cdot;\mathbb Q)$, since the intersection form on $A\times A$ vanishes, while the intersection form on $B\times B$ does not. 
\end{example} 

%To relate thickenings with immersions of $2$-complexes, note that for any PL immersion of $K$ into $S^4$, pulling back the regular neighborhood gives a thickening of $K$. 

\subsection{Krushkal's obstructions}
For a PL embedding of $K$ into $\mathbb S^4$ one can take the regular neighborhood $M$ of $K$ to get a thickening. Since this thickening is embedded in $\mathbb S^4$, it has special properties. For instance, the intersection form on $H_2(M;\mathbb Q)$ vanishes, since intersection numbers between homology cycles in $M$ can be computed in $\mathbb S^4$ where all these intersection numbers are zero. Krushkal observed that if $H_1(K;\mathbb Q)=0$, then there are additional restrictions on thickenings $M$ that embed in $\mathbb S^4$, expressed in terms of the cohomology of the boundary $\partial M$, as follows.

\begin{theorem}[Lemma 3.6 in \cite{Krushkal}]
Suppose that $M$ is a $4$-manifold with boundary that embeds in $\mathbb S^4$, and $H_1(M;\mathbb Q)=0$. Then all Massey products on $H^1(\partial M;\mathbb Q)$ vanish. 
\end{theorem}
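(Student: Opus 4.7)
The plan is to exploit the complementary region $N := \overline{\S^4 \setminus M}$, so that $\partial N = \partial M$ and $\S^4 = M \cup_{\partial M} N$. The key cohomological inputs, with rational coefficients throughout, are: (i) the restriction $H^1(N) \to H^1(\partial M)$ is an isomorphism, from the Mayer--Vietoris sequence for $\S^4 = M \cup N$ using $H^1(\S^4) = H^2(\S^4) = 0$ together with the hypothesis $H^1(M;\Q) = 0$; and (ii) $H^2(N;\Q) = 0$, by Alexander duality, since $H^2(N;\Q) \cong \tilde H_1(M;\Q) = 0$. Thus every class $\alpha \in H^1(\partial M;\Q)$ lifts uniquely to $\tilde\alpha \in H^1(N;\Q)$, and $H^2(N;\Q)$ is too small to carry any cup or Massey product of $1$-dimensional classes.

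The core strategy is to build a defining system for the Massey product entirely on $N$ and then restrict it to $\partial M$. I would first dispatch the base case by naturality of the cup product: for $\alpha_1,\alpha_2 \in H^1(\partial M;\Q)$ one has $\alpha_1\smile\alpha_2 = (\tilde\alpha_1\smile\tilde\alpha_2)\vert_{\partial M} = 0$, since $H^2(N;\Q) = 0$. For an $n$-fold Massey product $\langle\alpha_1,\dots,\alpha_n\rangle$, pick cocycle lifts $\tilde a_i \in C^1(N;\Q)$ of $\tilde\alpha_i$ and inductively on $j-i$ choose $\tilde a_{i,j} \in C^1(N;\Q)$ satisfying
\[
d\tilde a_{i,j} \;=\; \sum_{i<k<j}(-1)^{|\tilde a_{i,k}|}\, \tilde a_{i,k}\smile\tilde a_{k,j}.
\]
The right-hand side is a cocycle by the standard Massey product identity, and it is a coboundary because $H^2(N;\Q) = 0$, so such $\tilde a_{i,j}$ exist. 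Restricting $\{\tilde a_{i,j}\}$ to $\partial M$ produces a defining system $\{a_{i,j}\}$ on $\partial M$ whose value cocycle $\mu = \sum_k \pm\, a_{0,k}\smile a_{k,n}$ is the restriction of $\tilde\mu = \sum_k \pm\,\tilde a_{0,k}\smile\tilde a_{k,n} \in C^2(N;\Q)$. But $[\tilde\mu] \in H^2(N;\Q) = 0$, hence $\tilde\mu = d\tilde\nu$ for some $\tilde\nu \in C^1(N;\Q)$, and therefore $\mu = d(\tilde\nu\vert_{\partial M})$ is a coboundary on $\partial M$. This exhibits $0$ as a value of $\langle\alpha_1,\dots,\alpha_n\rangle$, proving that the Massey product vanishes.

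The main obstacle is bookkeeping rather than any deep point. The induction must carry, for each length $\ell < n$ and each sub-string of the $\alpha_i$'s, both the vanishing of the corresponding lower-order Massey product on $\partial M$ (so that the next product is even defined) and a compatible choice of defining system on $N$ whose restriction to $\partial M$ realises this vanishing. Fixing a clean cochain model, say singular or simplicial cochains, is useful so that restriction to $\partial M$ is an honest DGA map, signs in the Massey formula are unambiguous, and every class in $H^2(N;\Q) = 0$ is literally represented by a coboundary of some specific $1$-cochain on $N$. Once this framework is in place, the entire theorem collapses to the two cohomological vanishing inputs (i) and (ii).
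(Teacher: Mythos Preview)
The paper does not supply its own proof of this statement; it is quoted as Lemma~3.6 of \cite{Krushkal} and used as a black box. So there is nothing in the present paper to compare against.

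That said, your argument is correct and is essentially the standard one. Setting $N=\overline{\mathbb S^4\setminus M}$, the Mayer--Vietoris sequence for $\mathbb S^4=M\cup_{\partial M}N$ together with $H^1(M;\mathbb Q)=0$ gives $H^1(N;\mathbb Q)\cong H^1(\partial M;\mathbb Q)$, and Alexander duality gives $H^2(N;\mathbb Q)\cong H_1(M;\mathbb Q)=0$. Since $H^2(N;\mathbb Q)=0$, one can inductively build a defining system on $N$ for any tuple of degree-one classes, and the resulting value cocycle lies in $H^2(N;\mathbb Q)=0$; restricting the entire defining system along the DGA map $C^\bullet(N;\mathbb Q)\to C^\bullet(\partial M;\mathbb Q)$ then exhibits $0$ as a value of the Massey product on $\partial M$. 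The only small points to keep tidy are the ones you already flag: work in a fixed cochain model so that restriction is a genuine DGA map, and be clear that ``vanishes'' means $0$ lies in the Massey set (the indeterminacy is not being controlled, nor does it need to be).
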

Thus, if a $2$-complex $K$ embeds PL in $\mathbb{S}^4$, then it admits a thickening $M$ whose intersection form on $H_2(M;\mathbb Q)$ and Massey products on $H^1(\partial M;\mathbb Q)$ all vanish. Therefore, one way to show that $K$ does not PL embed in $\mathbb{S}^4$ is to look at \emph{all} thickenings $M$ of $K$ and show that for \emph{each one of them} either the intersection form on $H_2(M;\mathbb Q)$ or one of the Massey products on $H^1(\partial M;\mathbb Q)$ does not vanish. These are the obstructions introduced in \cite{Krushkal}.

To look at all thickenings of a $2$-complex might sound like a daunting task. Nonetheless, Krushkal showed that for {\it any} thickening of an FKT example, one of these obstructions is non-zero by making use of Conway-Gordon's theorem and the relation between Massey products on $\partial M$ and Milnor invariants of links in $\mathbb{S}^3$ given by a theorem of Turaev. In general, however, we do not know how one can compute the above obstructions for a $2$-complex (except for our examples below).

\subsection{Krushkal's obstructions of $X_k$ are zero}
Let us now turn to our examples. Recall that $X_k=\Delta^2_6\#\hat\Delta^2_6\#P_k$. % where $\Delta^2_6$ and $\hat\Delta^2_6$ are copies of the $2$-skeleton of the $6$-dim simplex and $P_k$ is a pseudo-projective plane obtained by gluing $D^2$ to a circle via a degree $k$ map. 
Note that $\pi_1(X_k) = \mathbb Z/k$, so that $H_1(X_k;\mathbb Q)=0$ and we are in a situation where it makes sense to investigate Krushkal's obstructions. Denote by $\alpha_k$ the singular set of $P_k$.

We will show that Krushkal's obstructions of $X_k$ are zero by finding thickenings of $X_k$, which we will call $M_k$, for which the intersection form on $H_2(M_k;\mathbb{Q})$ and all Massey products on $H^1(\partial M_k; \mathbb{Q})$ vanish. We will first show that the complex $X_1$ embeds PL in $\mathbb{S}^4$. Therefore, $X_1$ has a thickening whose intersection form and Massey products vanish. Then we show that a thickening of $X_k$ can be obtained from a thickening of $X_1$ via a process that does not change either the intersection form or Massey products.

\begin{lemma}
The complex $X_1$ embeds PL in $\mathbb S^4$.
\end{lemma}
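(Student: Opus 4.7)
The plan is to construct an explicit PL embedding of $X_1$ into $\mathbb{S}^4$ in two stages. Since the degree-$1$ covering map is a homeomorphism, $P_1$ is topologically a $2$-disk; hence $P_1 - D^2$ is an annulus attached to $\bowtie$ along $\gamma$ via one of its boundary circles, with the other boundary free. It therefore suffices to embed $\bowtie$ in $\mathbb{S}^4$ and then extend the embedding across this annulus.

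For the first stage I would embed $\bowtie$ using the Conway--Gordon style construction recalled in Section~\ref{wlog}: embed $K_6$ in a $3$-ball in the equatorial $\mathbb{S}^3$ via the pattern of Figure~3(a), so that the only disjoint pair of $3$-cycles of $K_6$ with nonzero linking number is $(x_1x_2x_3, x_4x_5x_6)$, and similarly embed $\hat{K}_6$ in a disjoint $3$-ball. Cone $K_6$ off to $x_0$ and $\hat{K}_6$ off to $\hat{x}_0$ in the northern hemisphere, and in the southern hemisphere $\mathbb{D}^4$ fill each of the $19$ remaining $3$-cycles in $K_6 \cup \hat{K}_6$ (all except $x_4x_5x_6$ and $\hat{x}_4\hat{x}_5\hat{x}_6$) with pairwise disjoint PL disks; this is possible because, once those two critical faces are excluded, the remaining cycles are unlinked in their respective balls. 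Joining $x_6$ to $\hat{x}_6$ by an arc outside the two balls yields a PL embedding of $\bowtie$ in which $\gamma$ is an embedded PL circle.

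For the second stage I would push $\gamma$ off itself inside a PL regular neighborhood $N \cong \gamma \times D^3$ of $\gamma$ in $\mathbb{S}^4$ (the normal $D^3$-bundle is trivial because $\gamma$ is a $2$-sided PL circle in the orientable $\mathbb{S}^4$). In $N$, the $2$-simplices of $\bowtie$ incident to $\gamma$ appear as finitely many $2$-dimensional pages, which at each point $p \in \gamma$ -- both at interior points of edges and at vertices -- occupy only finitely many half-ray directions in the normal fiber $D^3_p$. A continuous PL section $\sigma \colon \gamma \to D^3 \setminus \{0\}$ avoiding these page-directions at every point exists by standard general position, since the complement of the forbidden set has non-empty path-connected fibers over $\gamma = S^1$. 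The radial annulus $\{(p, t\sigma(p)) : p \in \gamma,\ t \in [0,1]\}$ is then an embedded annulus in $N$ meeting $\bowtie$ only along $\gamma$, and its union with the embedded $\bowtie$ gives the required PL embedding $X_1 \hookrightarrow \mathbb{S}^4$.

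The only real technical subtlety will be the genericity of the pushoff in the second stage (making the annulus globally embedded and disjoint from $\bowtie$ away from $\gamma$, including across the vertices of $\gamma$ where the set of forbidden tangent directions jumps); this is routine in PL topology since at every point only finitely many rays are forbidden and the complement is open and non-empty. The real content of the lemma is thus the embeddability of $\bowtie$ itself, which reduces to the Conway--Gordon-style construction above.
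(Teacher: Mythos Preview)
Your argument has a genuine gap in Stage~2: the loop $\gamma$ is \emph{not} an embedded circle in $\bowtie$. By definition $\gamma$ is the concatenation of the triangle $x_4x_5x_6$ with the triangle $\hat x_4\hat x_5\hat x_6$ \emph{via the edge} $x_6\hat x_6$, so it traverses that edge twice. Consequently the regular neighborhood of the image of $\gamma$ in $\mathbb S^4$ is not a product $S^1\times D^3$; it is a neighborhood of a handcuffs graph. Your push-off recipe --- choosing a nowhere-zero normal section over an embedded $S^1$ --- therefore does not apply as written. What you actually need is an annulus whose attaching map is the (non-injective) loop $\gamma$, with the two sheets along $x_6\hat x_6$ embedded disjointly in the normal $D^3$-fibers and matching up correctly at the vertices $x_6,\hat x_6$ where the local page structure changes. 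This can certainly be arranged, but it needs an explicit local model near $x_6\hat x_6$ rather than a global section argument.

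There is a smaller issue in Stage~1 as well: the assertion that the remaining $3$-cycles ``are unlinked'' and hence bound pairwise disjoint disks in $\mathbb D^4$ is not justified by linking numbers alone (think of Borromean-type phenomena), and these cycles are not even pairwise disjoint --- they share edges --- so one really needs an embedding of the full subcomplex $\Delta_6^2\setminus\mathrm{int}(x_4x_5x_6)$, not merely disjoint disks on an unlink. The conclusion is correct, but it is exactly the content of van~Kampen's result that $\Delta_6^2$ immerses in $\mathbb S^4$ with a single double point; you should invoke that rather than the linking argument.

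The paper's proof sidesteps both issues at once. It observes that van~Kampen's single--double-point immersion immediately gives an embedding of $\Delta_6^2-D^2$ (a small disk removed from a face), takes two disjoint such embeddings, and then drags an interval $I\subset\partial D^2$ onto an interval $\hat I\subset\partial\hat D^2$ along an embedded arc. The resulting complex $(\Delta_6^2-D^2)\cup_{I=\hat I}(\hat\Delta_6^2-\hat D^2)$ is PL homeomorphic to $X_1$: the annular remnants of the two removed faces together play the role of your annulus $P_1-D^2$, and the identified interval $I=\hat I$ plays the role of the edge $x_6\hat x_6$ with its two incoming sheets. This identification is exactly what makes the doubled edge a non-issue in the paper's approach, whereas in yours it has to be handled by hand.
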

\begin{proof}
Van Kampen showed in \cite{vanKampenpaper} that the $2$-complex $\Delta_6^2$ immerses PL in $\mathbb S^4$ with a single self-intersection, so removing a small $2$-disk from the interior of a cell in $\Delta_6^2$ yields a $2$-complex $(\Delta_6^2-{D^2})$ that does embed in $\mathbb S^4$. Now, pick two disjoint embeddings of this complex $(\Delta_6^2- D^2)\hookrightarrow \mathbb S^4$ and $\hat (\Delta_6^2-\hat D^2)\hookrightarrow \mathbb S^4$. Use a path from $\partial D^2$ to $\partial\hat D^2$ (embedded and with interior disjoint from both of the $2$-complexes) to guide an isotopy of the first embedding which drags a small neighborhood of an interval $I$ in $\partial D^2$ along the path until it is glued to an interval $\hat I$ in $\partial\hat D^2$. The resulting $2$-complex $(\Delta^2_6- D^2)\cup_{I=\hat I}(\hat\Delta^2_6- \hat D^2)$ embedded in $\mathbb S^4$ is precisely $X_1$. 
%Use a path from $\partial D^2$ to $\partial\hat D^2$ (embedded and with interior disjoint from both of the $2$-complexes) to modify the first embedding so that an interval $I$ in $\partial D^2$ is glued to an interval $\hat I$ in $\partial\hat D^2$. The resulting $2$-complex $(\Delta^2_6- D^2)\cup_{I=\hat I}(\hat\Delta^2_6- \hat D^2)$ embedded in $\mathbb S^4$ is precisely $X_1$. 
\end{proof} 

\subsection*{The thickening $M_k$}
When $k=1$, the regular neighborhood of a PL embedding $X_1\hookrightarrow \mathbb S^4$ is a thickening of $X_1$, which we will call $M_1$. For other values of $k$, since $$X_k = X_1 \cup_{\alpha_1} (P_k-D^2),$$
to build a thickening of $X_k$ we will assemble a thickening of $X_1$ and a thickening of $(P_k-D^2)$ together. Note that $(P_k-D^2)$ can be thought of as a bundle of $k$-pods over $\mathbb{S}^1$ with monodromy given by a $(2\pi/k)$-rotation, so it embeds in $\mathbb{S}^3$ and it is easy to see that a regular neighborhood in $\mathbb S^4$ of such an embedding of    $(P_k-D^2)$ is $S^1\times D^3$. The thickening of $X_1$ we will use is $M_1$, and the thickening of $(P_k-D^2)$ we will use is $S^1\times D^3$. We assemble these two pieces as follows. 
 
The boundary $\partial M_1$ of $M_1$ contains a nearby copy of $\alpha_1$, which we call $\alpha_1'$. Let $T$ be a regular neighborhood of this curve $\alpha_1'$ in $\partial M_1$, so $T$ is a $3$-dimensional solid torus. %(i.e. it is homeomorphic to $S^1\times D^2$)
The thickening $M_k$ of $X_k$ will be obtained by attaching to $M_1$ a copy of $S^1\times D^3$, identifying the solid torus $T$ in $\partial M_1$ with another solid torus in the boundary of $S^1\times D^3$. Now, look at the boundary $\partial(S^1\times D^3)=S^1\times S^2$. Let %$s\in S^2$ be the south pole and 
$S^1\subset S^2$ be the equator. Now, with an abuse of notation, %let $\alpha_k=S^1\times s$, and 
let $\alpha_1'$ be the loop that is a $(k,1)$-torus knot in $S^1\times S^1\subset S^1\times S^2$ (i.e. the loop $\alpha_1'$ goes around the first $S^1$-factor $k$ times). This is the loop that we will identify with the loop $\alpha_1' \subset \partial M_1$. Now, identify a regular neighborhood of $\alpha_1'$ in $S^1\times S^2$ with the solid torus $T\subset \partial M_1$. We obtain
$$
M_k:=M_1\cup_{T}(S^1\times D^3)
$$ 
the desired thickening of $X_k$.
%\begin{lemma}
%$M_k$ is a thickening of $X_k$. 
%\end{lemma}
\begin{remark}
Alternatively, one can extend the above PL embedding of $X_1$ to a PL immersion of $X_k$ and pull back the regular neighborhood of the image to obtain the same thickening $M_k$ of $X_k$.
\end{remark}
\subsection*{Comparing $M_1$ and $M_k$}
It follows from Mayer-Vietoris that the inclusion $M_1\hookrightarrow M_k$ is a rational homology (and cohomology) isomorphism. In particular, since the intersection form on $M_1$ vanishes, the intersection form on $M_k$ vanishes as well. Now, let $\partial M_1\times [0,1]$ be an inward collar neighborhood of the boundary $\partial M_1$ in $M_1$. Cutting out its complement $M_1-(\partial M_1\times[0,1])$ from $M_k$ leaves a cobordism  
$$W:=(\partial M_1\times [0,1])\cup_{T\times\{1\}}(S^1\times D^3),$$ between $\partial M_1$ and $\partial M_k$. Since $M_1\hookrightarrow M_k$ is a rational homology (and cohomology) isomorphism, excision implies that $i_1:\partial M_1\times[0,1]\hookrightarrow W$ is a rational homology (and cohomology) isomorphism, and Poincare-Lefschetz duality $H_*(W,\partial M_1;\mathbb Q)\cong H^{4-*}(W,\partial M_k;\mathbb Q)$ implies that $i_k:\partial M_k\hookrightarrow W$ is, as well. 

Since $M_1$ embeds in $\mathbb S^4$, all Massey products on $\partial M_1$ vanish. Now, given a rational cohomology isomorphism $f:X\ra Y$, vanishing of all Massey products is a property that holds for $X$ if and only if it holds for $Y$ (see \cite{kraines}). So, the cohomology isomorpism $i_1$ shows that all Massey products on $W$ vanish and from there the cohomology isomorpism $i_k$ shows that all Massey products on $\partial M_k$ vanish. 

In conclusion, we have constructed a thickening $M_k$ of $X_k$ for which all of Krushkal's obstructions vanish.

\bibliography{Reference}
\bibliographystyle{amsplain}

\end{document}